\rm \setlength{\textwidth}{150mm}
\newtheorem{theorem}{Theorem}
\newtheorem{proposition}{Proposition}
\newtheorem{lemma}{Lemma}
\newtheorem{corollary}{Corollary}
\newtheorem{observation}{Observation}
\begin{document}
\title{\Large\bf Nordhaus-Gaddum-type results for the\\
generalized edge-connectivity of graphs\footnote{Supported by NSFC
No.11071130 and the ``973'' project.}}
\author{\small Xueliang Li, Yaping Mao
\\
\small Center for Combinatorics and LPMC-TJKLC
\\
\small Nankai University, Tianjin 300071, China
\\
\small lxl@nankai.edu.cn; maoyaping@ymail.com}
\date{}
\maketitle
\begin{abstract}
Let $G$ be a graph, $S$ be a set of vertices of $G$, and
$\lambda(S)$ be the maximum number $\ell$ of pairwise edge-disjoint
trees $T_1, T_2, \cdots, T_{\ell}$ in $G$ such that $S\subseteq
V(T_i)$ for every $1\leq i\leq \ell$. The generalized
$k$-edge-connectivity $\lambda_k(G)$ of $G$ is defined as
$\lambda_k(G)= min\{\lambda(S) | S\subseteq V(G) \ and \ |S|=k\}$.
Thus $\lambda_2(G)=\lambda(G)$. In this paper, we consider the
Nordhaus-Gaddum-type results for the parameter $\lambda_k(G)$. We
determine sharp upper and lower bounds of
$\lambda_k(G)+\lambda_k(\overline{G})$ and $\lambda_k(G)\cdot
\lambda_k(\overline{G})$ for a graph $G$ of order $n$, as well as
for a graph of order $n$ and size $m$. Some graph classes attaining
these bounds are also given.

{\flushleft\bf Keywords}: edge-connectivity; Steiner
tree; edge-disjoint trees; generalized edge-connectivity;
complementary graph.\\[2mm]
{\bf AMS subject classification 2010:} 05C40, 05C05, 05C76.
\end{abstract}

\section{Introduction}
All graphs considered in this paper are undirected, finite and
simple. We refer to the book \cite{bondy} for graph theoretical
notation and terminology not described here. For a graph $G(V,E)$
and a set $S\subseteq V$ of at least two vertices, \emph{an
$S$-Steiner tree} or \emph{an Steiner tree connecting $S$} (Shortly,
\emph{a Steiner tree}) is a subgraph $T(V',E')$ of $G$ which is a
tree such that $S\subseteq V'$. Two Steiner trees $T$ and $T'$
connecting $S$ are \emph{edge-disjoint} if $E(T)\cap
E(T')=\varnothing$. The \emph{Steiner Tree Packing Problem} for a
given graph $G(V,E)$ and $S\subseteq V(G)$ asks to find a set of
maximum number of edge-disjoint $S$-Steiner trees in $G$. This
problem has obtained wide attention and many results have been
worked out, see \cite{Kriesell1, Kriesell2, West}. The problem for
$S=V(G)$ is called the \emph{Spanning Tree Packing Problem}. For any
graph $G$ of order $n$, the \emph{spanning tree packing number} or
\emph{$STP$ number}, is the maximum number of edge-disjoint spanning
trees contained in $G$. For the $STP$ number, Palmer gave a good
survey, see \cite{Palmer}.

Recently, we introduced the concept of generalized edge-connectivity
of a graph $G$ in \cite{LMS}. For $S\subseteq V(G)$, the
\emph{generalized local edge-connectivity} $\lambda(S)$ is the
maximum number of edge-disjoint trees in $G$ connecting $S$. Then
the \emph{generalized $k$-edge-connectivity} $\lambda_k(G)$ of $G$
is defined as $\lambda_k(G)= min\{\lambda(S) : S\subseteq V(G) \ and
\ |S|=k\}$. Thus $\lambda_2(G)=\lambda(G)$. Set $\lambda_k(G)=0$
when $G$ is disconnected. We call it the generalized
$k$-edge-connectivity since Chartrand et al. in \cite{Chartrand1}
introduced the concept of generalized (vertex) connectivity in 1984.
There have been many results on the generalized connectivity, see
\cite{LLSun, LL, LLZ, LMS}.

One can see that the Steiner Tree Packing Problem studies local
properties of graphs, but the generalized edge-connectivity focuses
on global properties of graphs. Actually, the $STP$ number of a
graph $G$ is just $\lambda_n(G)$.

In addition to being natural combinatorial measures, the Steiner
Tree Packing Problem and the generalized edge-connectivity can be
motivated by their interesting interpretation in practice as well as
theoretical consideration. For the practical backgrounds, we refer
to \cite{Grotschel1, Grotschel2, Sherwani}.

From a theoretical perspective, both extremes of this problem are
fundamental theorems in combinatorics. One extreme of the problem is
when we have two terminals. In this case internally (edge-)disjoint
trees are just internally (edge-)disjoint paths between the two
terminals, and so the problem becomes the well-known Menger theorem.
The other extreme is when all the vertices are terminals. In this
case internally disjoint trees and edge-disjoint trees are just
spanning trees of the graph, and so the problem becomes the
classical Nash-Williams-Tutte theorem.

\begin{theorem}(Nash-Williams \cite{Nash}, Tutte \cite{Tutte})\label{th1}
A multigraph $G$ contains a system of $\ell$ edge-disjoint spanning
trees if and only if
$$
\|G/\mathscr{P}\|\geq \ell(|\mathscr{P}|-1)
$$
holds for every partition $\mathscr{P}$ of $V(G)$, where
$\|G/\mathscr{P}\|$ denotes the number of crossing edges in $G$,
i.e., edges between distinct parts of $\mathscr{P}$.
\end{theorem}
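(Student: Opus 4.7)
The plan is to establish necessity and sufficiency separately, with the former being a short counting argument and the latter being reduced to a matroid-union statement.

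For necessity, assume $T_1, \dots, T_\ell$ are edge-disjoint spanning trees of $G$ and let $\mathscr{P}$ be a partition of $V(G)$ with $|\mathscr{P}|=p$ parts. Contracting each part to a single vertex, the image of each $T_i$ is a spanning connected multigraph on $p$ vertices, so it accounts for at least $p-1$ non-loop edges. Summing these contributions over the $\ell$ pairwise edge-disjoint trees will give $\|G/\mathscr{P}\| \geq \ell(p-1)$.

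For sufficiency, I would invoke Edmonds' matroid-union theorem applied to the graphic matroid $M(G)$. A system of $\ell$ edge-disjoint spanning trees is exactly an edge-disjoint packing of $\ell$ bases of $M(G)$, which exists if and only if the $\ell$-fold union $\bigvee_{i=1}^{\ell} M(G)$ has rank $\ell(n-1)$. Matroid union gives
\[
r\Bigl(\bigvee_{i=1}^{\ell} M(G)\Bigr) = \min_{F \subseteq E(G)} \bigl(|E(G) \setminus F| + \ell \cdot r_{M(G)}(F)\bigr),
\]
and since $r_{M(G)}(F) = n - c(F)$, where $c(F)$ denotes the number of connected components of the spanning subgraph $(V(G),F)$, the desired rank equality rewrites as
\[
|E(G) \setminus F| \geq \ell\bigl(c(F)-1\bigr) \quad \text{for every } F \subseteq E(G).
\]

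The core of the argument is then to show that this edge-set condition is equivalent to the partition condition in the theorem. In one direction, given $F$, let $\mathscr{P}(F)$ be the partition of $V(G)$ into the components of $(V(G),F)$; every crossing edge of $\mathscr{P}(F)$ avoids $F$, so $\|G/\mathscr{P}(F)\| \leq |E(G) \setminus F|$, and the partition hypothesis yields the edge-set inequality. Conversely, given a partition $\mathscr{P}$ with $p$ parts, let $F$ consist of all edges of $G$ lying inside some part; then $|E(G)\setminus F| = \|G/\mathscr{P}\|$ while $c(F) \geq p$, so the edge-set inequality immediately delivers $\|G/\mathscr{P}\| \geq \ell(p-1)$.

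The main obstacle is the matroid-union theorem itself, which I would cite as a black box rather than reprove. The self-contained alternative is Tutte's original combinatorial argument, which takes a maximum system of $\ell$ edge-disjoint forests and enlarges any deficient one by tracing augmenting edge-sequences through the other forests; this approach works but requires substantially more bookkeeping than the matroid-union reduction outlined above.
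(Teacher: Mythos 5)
The paper does not prove this statement at all: Theorem \ref{th1} is the classical Nash--Williams--Tutte packing theorem, quoted with citations to \cite{Nash} and \cite{Tutte} and used as a black box (e.g.\ in Corollary \ref{cor1} and Lemma \ref{lem7}), so there is no in-paper proof to compare against. Your sketch is a correct and standard derivation. The necessity half (contract each part of $\mathscr{P}$; each tree's image is connected and spanning on $|\mathscr{P}|$ vertices, hence uses at least $|\mathscr{P}|-1$ crossing edges, and edge-disjointness lets you sum) is complete as stated. The sufficiency half via Edmonds' matroid union applied to $\ell$ copies of the graphic matroid is also sound: the rank formula plus $r_{M(G)}(F)=n-c(F)$ turns the base-packing condition into the edge-set inequality $|E(G)\setminus F|\ge \ell\bigl(c(F)-1\bigr)$, and your two-way translation between subsets $F$ and partitions (components of $(V(G),F)$ in one direction, edges inside parts in the other) is exactly the right bridge; it is worth making explicit that an independent set of size $\ell(n-1)$ in the union splits into $\ell$ forests that are forced to be spanning trees, and that the disconnected case (where $\ell\ge 1$ is impossible) is certified by taking $\mathscr{P}$ to be the components of $G$. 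The only caveat is proportionality: the matroid union theorem you invoke is of essentially the same depth as the theorem itself (Nash-Williams' own route runs through exactly this circle of ideas), so as a self-contained proof your proposal defers the real work, as you yourself acknowledge by pointing to Tutte's augmenting-forest argument as the elementary alternative. For the purposes of this paper, citing the theorem, as the authors do, is all that is needed.
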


\begin{corollary}\label{cor1}
Every $2\ell$-edge-connected graph contains a system of $\ell$
edge-disjoint spanning trees.
\end{corollary}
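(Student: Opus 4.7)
The plan is to derive Corollary \ref{cor1} as a direct consequence of Theorem \ref{th1}. So I will start with a $2\ell$-edge-connected graph $G$ and an arbitrary partition $\mathscr{P}=\{V_1,V_2,\dots,V_p\}$ of $V(G)$, and verify the Nash-Williams-Tutte inequality $\|G/\mathscr{P}\|\geq \ell(p-1)$. Once that is checked for every $\mathscr{P}$, Theorem \ref{th1} immediately yields the desired $\ell$ edge-disjoint spanning trees.

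The key step is a simple double-counting of the crossing edges via the edge cuts of the individual parts. For each part $V_i$, the set $[V_i,V(G)\setminus V_i]$ is an edge cut of $G$, so the hypothesis of $2\ell$-edge-connectivity gives $|[V_i,V(G)\setminus V_i]|\geq 2\ell$ (here I assume $V_i\ne\emptyset$ and $V_i\ne V(G)$, which holds whenever $p\geq 2$; the case $p=1$ is trivial since the inequality reads $\|G/\mathscr{P}\|\geq 0$). Every crossing edge has its two endpoints in different parts, hence is counted exactly twice in the sum $\sum_{i=1}^{p}|[V_i,V(G)\setminus V_i]|$, giving
$$
2\,\|G/\mathscr{P}\|\;=\;\sum_{i=1}^{p}|[V_i,V(G)\setminus V_i]|\;\geq\;2\ell p.
$$
Dividing by $2$ yields $\|G/\mathscr{P}\|\geq \ell p\geq \ell(p-1)$, which is exactly the inequality demanded by Theorem \ref{th1}.

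Since the partition $\mathscr{P}$ was arbitrary, Theorem \ref{th1} applies and produces $\ell$ pairwise edge-disjoint spanning trees in $G$, completing the proof. I do not anticipate a real obstacle here: the whole argument is just the observation that edge-connectivity controls the size of every cut, combined with the handshake-style double count that converts a sum of cut sizes into twice the number of crossing edges. The only minor point to be careful about is the degenerate partition with $p=1$, which is handled trivially as noted above.
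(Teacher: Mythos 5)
Your argument is correct and is precisely the standard derivation the paper intends: the paper states this as an immediate corollary of Theorem \ref{th1} without proof, and your double-counting of the cuts $[V_i,V(G)\setminus V_i]$ to get $\|G/\mathscr{P}\|\geq \ell p\geq \ell(p-1)$ is the canonical way to verify the Nash--Williams--Tutte condition. The handling of the degenerate case $p=1$ is also fine.
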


Let $\mathcal {G}(n)$ denote the class of simple graphs of order $n$
and $\mathcal {G}(n,m)$ the subclass of $\mathcal {G}(n)$ having $m$
edges. Give a graph theoretic parameter $f(G)$ and a positive
integer $n$, the \emph{Nordhaus-Gaddum(\textbf{N-G}) Problem} is to
determine sharp bounds for: $(1)$ $f(G)+f(\overline{G})$ and $(2)$
$f(G)\cdot f(\overline{G})$, as $G$ ranges over the class $\mathcal
{G}(n)$, and characterize the extremal graphs. The Nordhaus-Gaddum
type relations have received wide investigations. Recently,
Aouchiche and Hansen published a survey paper on this subject, see
\cite{Aouchiche}.

In this paper, we study $\lambda_k(G)+\lambda_k(\overline{G})$ and
$\lambda_k(G)\cdot \lambda_k(\overline{G})$ for the parameter
$\lambda_k(G)$ where $G\in \mathcal {G}(n)$ and $G\in \mathcal
{G}(n,m)$.

\section{Nordhaus-Gaddum-type results in $\mathcal {G}(n)$}

The following observation is easily seen.

\begin{observation}\label{obs1}
$(1)$ If $G$ is a connected graph, then $1\leq \lambda_k(G)\leq
\lambda(G)\leq \delta(G)$;

$(2)$ If $H$ is a spanning subgraph of $G$, then $\lambda_k(H)\leq
\lambda_k(G)$.

$(3)$ Let $G$ be a connected graph with minimum degree $\delta$. If
$G$ has two adjacent vertices of degree $\delta$, then
$\lambda_k(G)\leq \delta-1$.
\end{observation}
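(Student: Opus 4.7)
The three parts are of different flavor, and the plan is to treat them in turn, with part (3) carrying the real content.

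For part (1), I would begin with the lower bound $\lambda_k(G)\geq 1$, which is immediate because any spanning tree of the connected graph $G$ is an $S$-Steiner tree for every $k$-set $S$. For the middle inequality $\lambda_k(G)\leq \lambda(G)$, the key observation is the monotonicity $\lambda(S')\geq \lambda(S)$ whenever $S'\subseteq S$: a tree whose vertex set contains $S$ automatically contains $S'$, so any $\ell$ edge-disjoint trees witnessing $\lambda(S)\geq\ell$ also witness $\lambda(S')\geq\ell$. Applying this with $S'$ any $(k-1)$-set and $S$ any one-vertex extension gives $\lambda_k(G)\leq\lambda(S)\leq\lambda(S')$, and minimizing over $S'$ yields $\lambda_k(G)\leq\lambda_{k-1}(G)$; iterating down to $k=2$ gives $\lambda_k(G)\leq\lambda_2(G)=\lambda(G)$. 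The final inequality $\lambda(G)\leq\delta(G)$ is the classical fact that the $\delta$ edges incident with a minimum-degree vertex form an edge cut.

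Part (2) I would handle in one line directly from the definition: edge-disjoint $S$-Steiner trees in $H$ are automatically edge-disjoint $S$-Steiner trees in $G$ since $E(H)\subseteq E(G)$, so $\lambda_H(S)\leq \lambda_G(S)$ for every $S$, and minimizing over $k$-sets gives $\lambda_k(H)\leq \lambda_k(G)$.

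Part (3) is where the real argument lives. Let $u,v$ be adjacent vertices of degree $\delta$ and pick any $k$-set $S\supseteq\{u,v\}$; since $k\geq 3$, there is a third vertex $w\in S$. I would then show that every $S$-Steiner tree $T$ uses at least two edges of $G$ incident with $\{u,v\}$. If $uv\notin E(T)$, then both $u$ and $v$ must have some other incident edge in $T$, and these two edges are distinct. If $uv\in E(T)$, then because $w\in V(T)\setminus\{u,v\}$, the edge $uv$ cannot be a whole component of $T$; hence at least one of $u,v$ has degree $\geq 2$ in $T$, producing a second edge at $\{u,v\}$. Since $G$ has only $d(u)+d(v)-1=2\delta-1$ edges incident with $\{u,v\}$ in total (the $-1$ accounting for the shared edge $uv$), any $\ell$ edge-disjoint such trees consume at least $2\ell$ of these, forcing $2\ell\leq 2\delta-1$, i.e.\ $\ell\leq \delta-1$. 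Hence $\lambda(S)\leq\delta-1$ and $\lambda_k(G)\leq\delta-1$.

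The step I expect to be slightly delicate is the ``two edges at $\{u,v\}$'' claim in the subcase $uv\in E(T)$, since it relies on the presence of a third terminal in $S$ to rule out the possibility that $T$ is just the edge $uv$ with $u,v$ both leaves. This is precisely why the hypothesis is naturally read as $k\geq 3$: for $k=2$ the bound $\delta-1$ already fails in $K_n$, where $\lambda_2(K_n)=n-1=\delta$.
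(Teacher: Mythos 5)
Your proof is correct, and since the paper states this observation without proof (``easily seen''), your write-up simply supplies the standard argument that the authors leave implicit: spanning trees for the lower bound, monotonicity of $\lambda(S)$ under taking subsets for $\lambda_k(G)\leq\lambda(G)$, subgraph monotonicity for (2), and for (3) the counting of at least two edges incident with $\{u,v\}$ per $S$-tree against the total of $2\delta-1$ such edges, giving $2\ell\leq 2\delta-1$ and hence $\ell\leq\delta-1$. Your remark that (3) requires $k\geq 3$ (a third terminal forcing a second edge at $\{u,v\}$ when $uv\in E(T)$) is exactly the right caveat and matches how the paper uses the observation, namely only for $3\leq k\leq n$.
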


Alavi and Mitchem in \cite{Alavi} considered Nordhaus-Gaddum-type
results for the connectivity and edge-connectivity parameters. In
\cite{LMS} we were concerned with analogous inequalities involving
the generalized $k$-connectivity and generalized
$k$-edge-connectivity. We showed that $1\leq
\lambda_k(G)+\lambda_k(\overline{G})\leq n-\lceil k/2 \rceil$, but
this is just a starting result and now we will further study the
Nordhaus-Guddum type relations.

To start with, let us recall the Harary graph $H_{n,d}$ on $n$
vertices, which is constructed by arranging the $n$ vertices in
circular order and spreading the $d$ edges around the boundary in a
nice way, keeping the chords as short as possible. They have the
maximum connectivity for their size and
$\kappa(H_{n,d})=\lambda(H_{n,d})= \delta(H_{n,d})=d$. Palmer
\cite{Palmer} gave the $STP$ number of some special graph classes.

\begin{lemma}\cite{Palmer}\label{lem1}
$(1)$ The $STP$ number of a complete bipartite graph $K_{a,b}$ is
$\lfloor\frac{ab}{a+b-1}\rfloor$.

$(2)$ The $STP$ number of a Harary graph $H_{n,d}$ is $\lfloor d
/2\rfloor$.
\end{lemma}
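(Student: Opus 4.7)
The plan is to treat the two parts of the lemma separately, in each case matching an elementary upper bound with a lower bound supplied either by Theorem \ref{th1} (Nash--Williams--Tutte) or by its corollary (Corollary \ref{cor1}).

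For part (1), the upper bound is a direct edge-count: since $K_{a,b}$ has $ab$ edges and each spanning tree has $a+b-1$ of them, edge-disjointness limits the number of spanning trees to at most $\lfloor ab/(a+b-1)\rfloor$. For the matching lower bound I would apply Theorem \ref{th1} with $\ell=\lfloor ab/(a+b-1)\rfloor$. Given a partition $\mathscr{P}=\{V_1,\ldots,V_p\}$ of $V(K_{a,b})$ and writing $a_i=|V_i\cap A|$, $b_i=|V_i\cap B|$ (with $A,B$ the two color classes), a short computation gives
\[
\|K_{a,b}/\mathscr{P}\|=ab-\sum_{i=1}^{p}a_ib_i,
\]
so the inequality to be checked reduces to $ab-\sum_i a_ib_i\geq \ell(p-1)$. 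The cleanest route is an exchange/smoothing argument on the blocks $\{(a_i,b_i)\}$: by repeatedly splitting a mixed block into its two monochromatic pieces, and tracking whether this strengthens or weakens the inequality (the change is $+a_ib_i$ on the left and $+\ell$ on the right), one reduces to the extremal configuration in which every block is either monochromatic or a single vertex, where the bound follows by direct computation.

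For part (2), the lower bound is immediate from Corollary \ref{cor1}: since $\lambda(H_{n,d})=d$, the graph is $2\lfloor d/2\rfloor$-edge-connected, hence contains $\lfloor d/2\rfloor$ edge-disjoint spanning trees. For the matching upper bound I would use the singleton partition in Theorem \ref{th1}, which yields
\[
\ell\leq\Bigl\lfloor\tfrac{|E(H_{n,d})|}{n-1}\Bigr\rfloor =\Bigl\lfloor\tfrac{\lceil nd/2\rceil}{n-1}\Bigr\rfloor,
\]
and a short parity/range check (using $d\leq n-1$) shows that this floor equals $\lfloor d/2\rfloor$ in the intended parameter range.

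The main technical hurdle I anticipate is the extremal analysis in part (1): bounding $\sum_i a_ib_i$ uniformly across all partitions of $V(K_{a,b})$, and cleanly handling the floor $\lfloor ab/(a+b-1)\rfloor$ across the different residues of $ab$ modulo $a+b-1$, is where most of the work will lie. Part (2), by contrast, rides essentially on Corollary \ref{cor1} together with an edge-count and should be comparatively routine.
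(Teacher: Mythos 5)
The paper itself gives no proof of this lemma (it is quoted from Palmer's survey), so the only question is whether your argument stands on its own. Part (2) essentially does: Corollary \ref{cor1} supplies the lower bound, and the edge count $\lceil nd/2\rceil$ gives the matching upper bound for all even $d$, and for odd $d$ whenever $d\leq n-3$ ($n$ even) resp.\ $d\leq n-4$ ($n$ odd). But your ``short parity/range check'' must exclude, not merely gloss over, the remaining cases: for $d=n-1$ with $n$ even ($H_{n,n-1}=K_n$) and for $d=n-2$ with $n$ odd the floor comes out as $\lfloor d/2\rfloor+1$, and there the stated formula is in fact false (e.g.\ $H_{5,3}$ has $8$ edges and does decompose into two edge-disjoint spanning trees, while $\lfloor 3/2\rfloor=1$), so no argument can close them. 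Since the present paper only invokes the lemma for $H_{n,2r}$, this caveat is harmless here, but it should be stated.

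The genuine gap is in the lower bound of part (1). Splitting a mixed block $V_i$ adds $a_ib_i$ to the left side and $\ell$ to the right side of the Nash--Williams--Tutte inequality, so the split instance implies the original one only when $a_ib_i\leq\ell$; when $a_ib_i>\ell$ the implication runs the wrong way. The dangerous partitions are precisely those with one large mixed block plus a few singletons, and your reduction never reaches them; worse, the configuration you reduce to (every block monochromatic or a singleton) is the trivial case, because a monochromatic block spans no edges of $K_{a,b}$, so there the crossing count is $ab$, $p\leq a+b$, and the inequality is immediate from the definition of $\ell$. All the content of the verification thus lives in exactly the instances your smoothing discards. The repair is to smooth in the opposite direction: for fixed $p$, the quantity $\sum_i a_ib_i$ is maximized by a partition with at most one non-singleton block, since merging two blocks while leaving behind a single vertex of a suitably chosen side changes $\sum_i a_ib_i$ by $b_j(a_i-1)+b_i(a_j-1)$ or by $a_j(b_i-1)+a_i(b_j-1)$, one of which is always nonnegative. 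It then remains to check the single extremal case $ab-a_1b_1\geq \ell\,(a+b-a_1-b_1)$, which after clearing denominators is $ab(a_1+b_1-1)\geq a_1b_1(a+b-1)$, i.e.\ the monotonicity of $1/x+1/y-1/(xy)$ in each variable on $[1,\infty)$. This ``one big part plus singletons is extremal'' computation is exactly the kind of step the paper performs in the proof of Lemma \ref{lem7}, so the corrected route fits the available toolkit; as you set it up, however, the key reduction does not go through.
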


Corresponding to $(1)$ of Observation \ref{obs1}, we can obtain a
sharp lower bound for the generalized $k$-edge-connectivity by
Corollary \ref{cor1}. Actually, a connected graph $G$ contains
$\lfloor\frac{1}{2}\lambda(G)\rfloor$ spanning trees. Each of them
is also a Steiner tree connecting $S$. So the following proposition
is immediate.

\begin{proposition}\label{pro1}
For a connected graph $G$ of order $n$ and $3\leq k\leq n$,
$\lambda_k(G)\geq \lfloor\frac{1}{2}\lambda(G)\rfloor$. Moreover,
the lower bound is sharp.
\end{proposition}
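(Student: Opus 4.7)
The inequality follows immediately from Corollary~\ref{cor1}, so my plan is mostly to set up the right application. I would first observe that a connected graph $G$ is $\lambda(G)$-edge-connected, hence $2\ell$-edge-connected for $\ell := \lfloor \lambda(G)/2 \rfloor$, so Corollary~\ref{cor1} supplies $\ell$ pairwise edge-disjoint spanning trees $T_1,\dots,T_\ell$ of $G$. The key observation is then tautological: for any $S \subseteq V(G)$ with $|S| = k$, each $T_i$ is a spanning tree, so $S \subseteq V(G) = V(T_i)$, meaning each $T_i$ is already an $S$-Steiner tree. Since the $T_i$'s remain pairwise edge-disjoint when viewed as $S$-Steiner trees, we get $\lambda(S) \geq \ell$, and taking the minimum over all $S$ of size $k$ yields $\lambda_k(G) \geq \lfloor \lambda(G)/2 \rfloor$.

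For sharpness I would exhibit an explicit graph achieving equality. A simple witness valid for every $3 \leq k \leq n$ is the cycle $C_n$: here $\lambda(C_n) = 2$, and for any $S \subseteq V(C_n)$ with $|S| \geq 3$ a short argument shows $\lambda(S) = 1$ (any connected subgraph of $C_n$ containing $S$ is a subpath of $C_n$ whose complement in $C_n$ is again a path, and that complementary path must omit at least one vertex of $S$; so two edge-disjoint $S$-Steiner trees cannot coexist), giving $\lambda_k(C_n) = 1 = \lfloor \lambda(C_n)/2 \rfloor$. For sharpness at arbitrarily large values of $\lambda(G)$ the Harary graph $H_{n,d}$ at $k=n$ works: combining Lemma~\ref{lem1}(2) with the identification $\lambda_n(G) = \mathrm{STP}(G)$ and the fact that $\lambda(H_{n,d}) = d$ gives $\lambda_n(H_{n,d}) = \lfloor d/2 \rfloor = \lfloor \lambda(H_{n,d})/2 \rfloor$.

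There is no real obstacle here; the entire statement is a one-step consequence of the Nash-Williams--Tutte theorem (via Corollary~\ref{cor1}) together with the trivial fact that a spanning tree is a Steiner tree for every vertex subset. The only care needed is in selecting and verifying the sharpness examples, but the cycle and Harary families above suffice.
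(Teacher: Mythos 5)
Your proof is correct and follows essentially the same route as the paper: apply Corollary~\ref{cor1} to get $\lfloor\lambda(G)/2\rfloor$ edge-disjoint spanning trees, note each is an $S$-Steiner tree, and certify sharpness with the cycle $C_n$ (all $k$) and the Harary graphs $H_{n,2r}$ (at $k=n$) via Lemma~\ref{lem1}. Your added justification that $\lambda_k(C_n)=1$ is a nice touch the paper leaves to the reader, but otherwise the arguments coincide.
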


In order to show the sharpness of this lower bound for $k=n$, we
consider the Harary graph $H_{n,2r}$. Clearly, $\lambda(G)=2r$. From
$(2)$ of Lemma \ref{lem1}, $H_{n,2r}$ contains $r$ spanning trees,
that is, $\lambda_n(H_{n,2r})=r$. So
$\lambda_n(H_{n,2r})=\lfloor\frac{1}{2}\lambda(G)\rfloor$. For
general $k \ (3\leq k\leq n)$, one can check that the cycle $C_n$
can attain the lower bound since
$\frac{1}{2}\lambda(C_n)=1=\lambda_k(C_n)$.

The following proposition indicates that the monotone properties of
$\lambda_k$, that is, $\lambda_n\leq \lambda_{n-1}\leq \cdots
\lambda_4\leq \lambda_3\leq \lambda$, is true for $2\leq k\leq n$.

\begin{proposition}\label{pro2}
For two integers $k$ and $n$ with $2\leq k\leq n-1$, and a connected
graph $G$, $\lambda_{k+1}(G)\leq \lambda_{k}(G)$.
\end{proposition}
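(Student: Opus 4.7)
The plan is to prove the monotonicity directly from the definition of $\lambda_k(G)$ as a minimum over $k$-subsets of $V(G)$. Since $\lambda_{k+1}(G)$ is a minimum over all $(k+1)$-subsets, it suffices to exhibit one $(k+1)$-subset $S'$ whose local generalized edge-connectivity $\lambda(S')$ is at most $\lambda_k(G)$.

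First, I would pick a $k$-subset $S \subseteq V(G)$ with $|S| = k$ attaining the minimum, i.e.\ $\lambda(S) = \lambda_k(G)$. Since $k \leq n-1$, there exists a vertex $v \in V(G) \setminus S$, and I set $S' = S \cup \{v\}$, so $|S'| = k+1$.

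The key step is the inclusion observation: every Steiner tree $T$ connecting $S'$ automatically contains $S$ (because $S \subseteq S'$), hence $T$ is also a Steiner tree connecting $S$. Consequently, any family of $\ell$ pairwise edge-disjoint Steiner trees connecting $S'$ is itself a family of $\ell$ pairwise edge-disjoint Steiner trees connecting $S$. Taking $\ell = \lambda(S')$ gives $\lambda(S') \leq \lambda(S)$. Combining,
\[
\lambda_{k+1}(G) \;\leq\; \lambda(S') \;\leq\; \lambda(S) \;=\; \lambda_k(G),
\]
which is exactly the claim.

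There is essentially no obstacle here: the argument is a one-line consequence of the ``minimum over $k$-sets'' definition together with the trivial fact that enlarging the terminal set can only make the Steiner tree packing harder. The only mild care needed is noting that the hypothesis $k \leq n-1$ is used precisely to guarantee the existence of a vertex outside $S$ to adjoin. No use of Theorem~\ref{th1} or Lemma~\ref{lem1} is required.
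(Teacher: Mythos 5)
Your proof is correct and uses essentially the same idea as the paper: adjoin a vertex to a $k$-set and observe that edge-disjoint trees connecting the enlarged set are also edge-disjoint trees connecting the original set. The only cosmetic difference is that you fix a minimizing $k$-set while the paper runs the argument over all $k$-sets (and treats $k=2$ via its Observation 1), which changes nothing of substance.
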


\begin{proof}
Assume $3\leq k\leq n-1$. Set $\lambda_{k+1}(G)=\ell$. For each
$S\subseteq V(G)$ with $|S|=k$, we let $S'=S\cup \{u\}$, where
$u\notin S$. Since $\lambda_{k+1}(G)=\ell$, there exist $\ell$
edge-disjoint trees connecting $S'$. These trees are also $\ell$
edge-disjoint trees connecting $S$. So $\lambda_{k}(G)\geq \ell$ and
$\lambda_{k+1}(G)\leq \lambda_{k}(G)$. Combining this with $(1)$ of
Observation \ref{obs1}, we get that $\lambda_{k+1}(G)\leq
\lambda_{k}(G)$ for $2\leq k\leq n-1$.
\end{proof}

Now we give the lower bounds of
$\lambda_k(G)+\lambda_k(\overline{G})$ and $\lambda_k(G)\cdot
\lambda_k(\overline{G})$.

\begin{lemma}\label{lem2}
Let $G\in \mathcal {G}(n)$. Then

$(1)$ $\lambda_k(G)+\lambda_k(\overline{G})\geq 1$;

$(2)$ $\lambda_k(G)\cdot \lambda_k(\overline{G})\geq 0$.

Moreover, the two lower bounds are sharp.
\end{lemma}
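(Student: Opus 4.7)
The plan is to reduce both bounds to two essentially trivial ingredients: the convention that $\lambda_k$ is nonnegative on disconnected graphs and equals at least $1$ on connected graphs, together with the classical fact that for any graph $G$ of order $n$, at least one of $G$ and $\overline{G}$ is connected.

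For part (2) I would simply observe that $\lambda_k(H)\geq 0$ for every graph $H$ (being either $0$ when $H$ is disconnected, by definition, or at least $1$ when $H$ is connected), and hence the product is automatically nonnegative. Sharpness is exhibited by taking $G=K_n$ (or any graph whose complement is disconnected), since then $\overline{G}$ has no edges, making $\lambda_k(\overline{G})=0$ and the product equal to $0$.

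For part (1) I would argue as follows. It is a standard fact (and easy to verify directly) that for any graph $G$ on $n\geq 2$ vertices, at least one of $G$ and $\overline{G}$ is connected; without loss of generality assume $G$ is connected. Then $G$ contains a spanning tree $T$, and for every $k$-subset $S\subseteq V(G)$ the tree $T$ itself is an $S$-Steiner tree, so $\lambda(S)\geq 1$ and therefore $\lambda_k(G)\geq 1$. Combined with $\lambda_k(\overline{G})\geq 0$, this gives the desired sum bound. For sharpness, take $G=K_{1,n-1}$: the star is connected with minimum degree $1$, so by $(1)$ of Observation \ref{obs1} we have $\lambda_k(G)\leq \delta(G)=1$, while connectedness gives $\lambda_k(G)\geq 1$; hence $\lambda_k(K_{1,n-1})=1$. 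Meanwhile $\overline{K_{1,n-1}}=K_{n-1}\cup K_1$ is disconnected, so $\lambda_k(\overline{G})=0$, and the sum equals $1$.

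There is no real obstacle in this lemma; it is a warm-up bound. The only modest subtlety is making sure the two sharpness examples are compatible with the definitional conventions (in particular that $\lambda_k$ of a disconnected graph is taken to be $0$, as stated in Section~1), and that the star example actually realizes $\lambda_k=1$ for every $k$ with $2\leq k\leq n$, which follows from Observation \ref{obs1}(1) applied to its minimum degree.
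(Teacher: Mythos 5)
Your proof is correct and follows essentially the same route as the paper: both rest on the fact that $G$ and $\overline{G}$ cannot both be disconnected (you argue it directly, the paper by contradiction) together with the nonnegativity of $\lambda_k$. Your explicit sharpness examples ($K_{1,n-1}$ and $K_n$) are valid; the paper instead defers the extremal discussion to Observation \ref{obs2} and Proposition \ref{pro3}.
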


\begin{proof}
$(1)$ If $\lambda_k(G)+\lambda_k(\overline{G})=0$, then
$\lambda_k(G)=\lambda_k(\overline{G})=0$, that is, $G$ and
$\overline{G}$ are all disconnected, which is impossible, and so
$\lambda_k(G)+\lambda_k(\overline{G})\geq 1$.

$(2)$ By definition, $\lambda_k(G)\geq 0$ and
$\lambda_k(\overline{G})\geq 0$, and so $\lambda_k(G)\cdot
\lambda_k(\overline{G})\geq 0$.
\end{proof}

The following observation indicates the graphs attaining the lower
bound of $(1)$ in Lemma \ref{lem2}.

\begin{observation}\label{obs2}
$\lambda_k(G)\cdot \lambda_k(\overline{G})=0$ if and only if $G$ or
$\overline{G}$ is disconnected.
\end{observation}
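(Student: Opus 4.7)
The plan is to prove this biconditional by directly unpacking the paper's definition of $\lambda_k$ on disconnected graphs and invoking Observation \ref{obs1}(1) on connected graphs; no graph-theoretic construction is needed.

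For the ($\Leftarrow$) direction, I would simply recall the convention stated in the introduction: $\lambda_k(H)=0$ whenever $H$ is disconnected. Thus, if either $G$ or $\overline{G}$ is disconnected, the corresponding factor is $0$ and hence $\lambda_k(G)\cdot\lambda_k(\overline{G})=0$.

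For the ($\Rightarrow$) direction, I would argue contrapositively. Suppose both $G$ and $\overline{G}$ are connected. By Observation \ref{obs1}(1), a connected graph $H$ satisfies $\lambda_k(H)\geq 1$, so $\lambda_k(G)\geq 1$ and $\lambda_k(\overline{G})\geq 1$, which gives $\lambda_k(G)\cdot\lambda_k(\overline{G})\geq 1>0$. Equivalently, if the product vanishes, at least one of $\lambda_k(G)$, $\lambda_k(\overline{G})$ must be zero, which by Observation \ref{obs1}(1) forces the corresponding graph to be disconnected.

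There is no real obstacle here; the statement is essentially a restatement of the convention $\lambda_k(H)=0\iff H$ disconnected, combined with the fact that $G$ and $\overline{G}$ cannot both be disconnected (used implicitly in Lemma \ref{lem2}(1), although not needed for this observation). The only thing to be careful about is making clear that $\lambda_k(H)\geq 1$ for \emph{every} connected $H$ with $|V(H)|\geq k$, which is exactly the content of Observation \ref{obs1}(1); this ensures the equivalence is strict in both directions.
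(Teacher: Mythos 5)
Your proof is correct and matches the intended reasoning: the paper states Observation \ref{obs2} without proof precisely because it follows immediately from the convention $\lambda_k(H)=0$ for disconnected $H$ together with Observation \ref{obs1}(1), which gives $\lambda_k(H)\geq 1$ for connected $H$. Nothing further is needed.
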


In \cite{LMS} we obtained the exact value of the generalized
$k$-edge-connectivity of a complete graph $K_n$.

\begin{lemma}\cite{LMS}\label{lem3}
For two integers $n$ and $k$ with $2\leq k\leq n$,
$\lambda_k(K_n)=n-\lceil k/2\rceil$.
\end{lemma}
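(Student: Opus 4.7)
By the vertex-transitivity of $K_n$, the value $\lambda(S)$ is the same for every $k$-subset $S\subseteq V(K_n)$, so it suffices to fix one such $S=\{v_1,\dots,v_k\}$ and prove matching lower and upper bounds on $\lambda(S)$.

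For the lower bound I plan to exhibit $n-\lceil k/2\rceil$ pairwise edge-disjoint $S$-Steiner trees explicitly. For each $u\in V(K_n)\setminus S$, the star with centre $u$ and leaves $S$ is an $S$-Steiner tree, and these $n-k$ stars are pairwise edge-disjoint because distinct stars use disjoint sets of $S$-to-$(V\setminus S)$ edges. Independently, the Nash--Williams--Tutte theorem (Theorem \ref{th1}) applied to $K_n[S]\cong K_k$ yields $\lfloor k/2\rfloor$ edge-disjoint spanning trees of $K_n[S]$, each of which is itself an $S$-Steiner tree and uses only edges inside $S$. Since the stars occupy only $S$-to-$(V\setminus S)$ edges while these spanning trees occupy only edges inside $S$, the two families together form $n-k+\lfloor k/2\rfloor=n-\lceil k/2\rceil$ edge-disjoint $S$-Steiner trees.

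For the upper bound, let $T_1,\dots,T_\ell$ be any family of edge-disjoint $S$-Steiner trees. After iteratively pruning any leaf that lies outside $S$, I may assume that every leaf of each $T_i$ belongs to $S$ and every non-$S$ vertex of $T_i$ has degree at least $2$ in $T_i$. For each $i$ let $a_i$, $b_i$, $c_i$ count the edges of $T_i$ lying inside $S$, between $S$ and $V(K_n)\setminus S$, and inside $V(K_n)\setminus S$, respectively, and set $s_i=|V(T_i)\setminus S|$. Since $T_i$ is a tree, $a_i+b_i+c_i=k+s_i-1$; since the induced subgraph of $T_i$ on $V(K_n)\setminus S$ is a forest on $s_i$ vertices, $c_i\leq s_i-1$ whenever $s_i\geq 1$ (and $c_i=0$ if $s_i=0$). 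Therefore $a_i+b_i\geq k-1$, with equality iff $s_i=0$ (in which case $T_i$ is a spanning tree of $K_n[S]$), and $a_i+b_i\geq k$ whenever $s_i\geq 1$.

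Partition the trees into the $\ell_1$ of first type ($s_i=0$) and the $\ell_2=\ell-\ell_1$ of second type ($s_i\geq 1$). First-type trees use $\ell_1(k-1)$ edges inside $S$, so $\ell_1(k-1)\leq\binom{k}{2}$, forcing $\ell_1\leq\lfloor k/2\rfloor$. Edge-disjointness on edges incident to $S$ gives $\sum_i(a_i+b_i)\leq\binom{k}{2}+k(n-k)$, which together with $\sum_i(a_i+b_i)\geq\ell_1(k-1)+\ell_2 k=\ell k-\ell_1$ yields $\ell\leq n-(k+1)/2+\ell_1/k$. Substituting $\ell_1\leq\lfloor k/2\rfloor$ and using integrality of $\ell$ gives $\ell\leq n-\lceil k/2\rceil$ in both parities of $k$; for $k$ odd this is the one slightly delicate point, since $\ell_1/k\leq(k-1)/(2k)<1/2$ must be absorbed by rounding. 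The main obstacle is precisely obtaining the sharp per-type edge lower bound $a_i+b_i\geq k$ for Steiner trees with at least one Steiner point; without it the elementary degree count would yield only the trivial bound $\ell\leq n-1$.
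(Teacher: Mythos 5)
Your proposal is correct, but there is nothing in the paper to compare it with line by line: the paper does not prove Lemma \ref{lem3} at all, it simply imports it from \cite{LMS}. So what you have produced is a self-contained substitute for the citation, and it is a sound one. Your lower bound (the $n-k$ stars centred at the vertices outside $S$, plus $\lfloor k/2\rfloor$ edge-disjoint spanning trees of $K_n[S]\cong K_k$, the two families living on disjoint edge sets) and your upper bound (prune Steiner leaves, observe that a tree with $s_i\geq 1$ Steiner points uses at least $k$ edges incident to $S$ while a tree with $s_i=0$ uses exactly $k-1$ edges inside $S$, then double-count the $\binom{k}{2}+k(n-k)$ edges meeting $S$ together with the constraint $\ell_1\leq\lfloor k/2\rfloor$, and finish by integrality in the odd case) both check out, including the parity bookkeeping $n-k+\lfloor k/2\rfloor=n-\lceil k/2\rceil$ and the absorption of $\ell_1/k<1/2$ for odd $k$. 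The one point you state without justification is that Nash--Williams--Tutte gives $\lfloor k/2\rfloor$ edge-disjoint spanning trees of $K_k$: Corollary \ref{cor1} alone only yields $\lfloor (k-1)/2\rfloor$, which is too weak when $k$ is even, so you do need the short partition computation (for a partition into $p$ parts the number of crossing edges of $K_k$ is at least $\binom{k}{2}-\binom{k-p+1}{2}\geq\lfloor k/2\rfloor(p-1)$), exactly the kind of calculation the paper itself carries out in the proof of Lemma \ref{lem7}; alternatively one can cite the classical fact that the spanning tree packing number of $K_k$ is $\lfloor k/2\rfloor$. With that small verification added, your argument is a complete and elementary proof of the lemma, arguably more transparent than relying on the external reference, and its structure (explicit packing for the lower bound, edge-counting around $S$ for the upper bound) is in the same spirit as the counting arguments the paper uses elsewhere (e.g.\ in Observation \ref{obs3} and Example 2).
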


For a connected graph $G$ of order $n$, we know that $1\leq
\lambda_k(G) \leq \lambda_k(K_n)=n-\lceil k/2\rceil$. In \cite{LMS}
we characterized the graphs attaining the upper bound.

\begin{lemma}\cite{LMS}\label{lem4}
For a connected graph $G$ of order $n$ with $3\leq k\leq n$,
$\lambda_k(G)=n-\lceil\frac{k}{2}\rceil$ if and only if $G=K_n$ for
$k$ even; $G=K_n\setminus M$ for $k$ odd, where $M$ is an edge set
such that $0\leq |M|\leq \frac{k-1}{2}$.
\end{lemma}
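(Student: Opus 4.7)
The plan is to prove both implications of the biconditional, splitting throughout by the parity of $k$.

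For the sufficiency direction, if $k$ is even and $G = K_n$, the equality is a direct instance of Lemma~\ref{lem3}. If $k$ is odd and $G = K_n \setminus M$ with $|M| \leq (k-1)/2$, then Observation~\ref{obs1}(2) together with Lemma~\ref{lem3} gives the upper bound $\lambda_k(G) \leq \lambda_k(K_n) = n - (k+1)/2$. For the matching lower bound I would fix an arbitrary $S \subseteq V(G)$ with $|S| = k$ and exhibit $n - (k+1)/2$ edge-disjoint Steiner trees for $S$. Concretely: for each $v \in V(G) \setminus S$, take the star centered at $v$ with leaves $S$ whenever all edges $vs$ are present; if some edge $vs$ is missing, repair the star by replacing it by a length-$2$ detour $v \to s' \to s$ through some $s' \in S$, at the cost of consuming one edge inside $S$. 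The residual subgraph of $K_n[S] = K_k$ (after deleting the edges of $M$ inside $S$ together with those used by the repairs) still admits $(k-1)/2$ edge-disjoint spanning trees by the Nash-Williams-Tutte theorem, since the total number of deleted edges inside $S$ remains at most $(k-1)/2$ and $K_k$ is sufficiently edge-connected. Combining these trees yields $(n-k) + (k-1)/2 = n - (k+1)/2$ edge-disjoint Steiner trees connecting $S$, as required.

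For the necessity direction, suppose $\lambda_k(G) = n - \lceil k/2 \rceil$. Observation~\ref{obs1}(1) forces $\delta(G) \geq n - \lceil k/2 \rceil$, so every vertex has at most $\lceil k/2 \rceil - 1$ non-neighbors. When $k$ is even, I would assume for contradiction that some non-edge $uv$ exists in $G$ and pick $S$ of size $k$ containing both $u$ and $v$. Each of the $\lambda(S)$ edge-disjoint Steiner trees $T_i$ contains the unique $u$-$v$ path in $T_i$, which has length at least $2$ because $uv \notin E(G)$, so it uses distinct edges at $u$ and at $v$. Via a careful double-counting of edges at $u$, at $v$, and inside $S$, one derives $\lambda(S) < n - k/2$, contradicting the hypothesis. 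When $k$ is odd, I would assume $|M| \geq (k+1)/2$ and choose $S$ of size $k$ covering as many endpoints of $M$ as possible; an analogous but iterated counting argument, aggregating the ``extra'' edges forced by each missing edge inside $S$ or across its boundary, yields $\lambda(S) < n - (k+1)/2$, again a contradiction.

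The main obstacle will be the necessity direction: turning the heuristic ``each missing edge costs a Steiner tree'' into a sharp inequality that separates the allowed regime ($|M|=0$ for $k$ even, $|M|\leq (k-1)/2$ for $k$ odd) from the forbidden one. The bare degree bound from Observation~\ref{obs1}(1) is too weak on its own, since the permitted non-edges do not violate it. The argument must track the global structure of the edge-disjoint Steiner trees --- in particular, how each non-edge in or across $S$ forces a Steiner tree to spend an extra edge drawn from the edge set of $K_n[S] = K_k$, and how these forced consumptions eventually exceed what $K_k$ (or $K_k$ minus the internal missing edges) can supply. This sufficiency construction essentially shows the tightness of such a budget, so the necessity proof can be viewed as its quantitative converse.
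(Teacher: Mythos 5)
First, note that the paper itself offers no proof of this lemma: it is quoted from \cite{LMS}, so your proposal can only be judged on its own merits. Your forward direction (graphs of the stated form achieve $n-\lceil k/2\rceil$) is essentially sound: stars at the $n-k$ outside vertices, repaired by detours through $S$ when an edge of $M$ crosses, plus $\frac{k-1}{2}$ spanning trees of the residual graph on $S$. But two points are glossed over. The claim that $K_k$ minus at most $\frac{k-1}{2}$ edges still has $\frac{k-1}{2}$ edge-disjoint spanning trees cannot be justified by ``$K_k$ is sufficiently edge-connected'': if the deleted edges concentrate at one vertex the graph is only $\frac{k-1}{2}$-edge-connected, so Corollary~\ref{cor1} (which would need $(k-1)$-edge-connectivity) does not apply, and you must run the Nash--Williams--Tutte partition count itself, which holds but with equality in the worst case (one vertex split off). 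Likewise, the existence of a repair vertex $s'$ with $vs'$ and $ss'$ both present and $ss'$ not already used needs a short count (at most $2|M|-2\leq k-3$ bad choices among $k-1$).

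The genuine gap is the converse direction, which you yourself identify as the main obstacle and never actually carry out: ``via a careful double-counting \dots one derives $\lambda(S)<n-k/2$'' is an intention, not an argument, and the observation that each tree's $u$--$v$ path has length at least $2$ does not by itself bound $\lambda(S)$. What is needed is an explicit budget argument of the following kind: for $S$ with $|S|=k$, every Steiner tree lying inside $S$ uses $k-1$ edges of $G[S]$, so there are at most $\big\lfloor\frac{\binom{k}{2}-m_S}{k-1}\big\rfloor$ of them, where $m_S$ counts missing edges inside $S$; every tree using an outside vertex uses at least $k$ edges incident with $S$ (its outside part spans at most $x-1$ edges among its $x$ outside vertices); comparing with the total of $\binom{k}{2}+k(n-k)-d$ edges incident with $S$, where $d$ counts missing edges meeting $S$, gives $\lambda(S)\leq \frac{a}{k}+\frac{k-1}{2}+(n-k)-\frac{d}{k}$ with $a$ the number of internal trees. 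For $k$ even a single missing edge inside $S$ forces $a\leq \frac{k}{2}-1$ and hence $\lambda(S)<n-\frac{k}{2}$; for $k$ odd, if there are at least $\frac{k+1}{2}$ missing edges one can choose $S$ meeting all of them (one endpoint each), giving $d\geq\frac{k+1}{2}$ and $\lambda(S)<n-\frac{k+1}{2}$. Some such quantitative count is the actual content of the characterization beyond Lemma~\ref{lem3}; without it your proposal proves only one implication.
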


As we know, it is difficult to characterize the graphs with
$\lambda_k(G)=1$, even with $\lambda_3(G)=1$. So we want to add some
conditions to attack such a problem. Motivated by such an idea, we
hope to characterize the graphs with
$\lambda_k(G)+\lambda_k(\overline{G})=1$. Actually, the
Norhaus-Gaddum-type problems also need to characterize the extremal
graphs attaining the bounds.

Before studying the lower bounds of $\lambda_k(G)+
\lambda_k(\overline{G})$ and $\lambda_k(G)\cdot
\lambda_k(\overline{G})$, we give some graph classes (Every element
of each graph class has order $n$), which will be used later.

For $n\geq 5$, $\mathcal {G}_n^1$ is a graph class as shown in
Figure 1 $(a)$ such that $\lambda(G)=1$ and $d_{G}(v_1)=n-1$ for
$G\in \mathcal {G}_n^1$, where $v_1\in V(G)$; $\mathcal {G}_n^2$ is
a graph class as shown in Figure 1 $(b)$ such that $\lambda(G)=2$
and $d_{G}(u_1)=n-1$ for $G\in \mathcal {G}_n^2$, where $u_1\in
V(G)$; $\mathcal {G}_n^3$ is a graph class as shown in Figure 1
$(c)$ such that $\lambda(G)=2$ and $d_{G}(v_1)=n-1$ for $G\in
\mathcal {G}_n^3$, where $v_1\in V(G)$; $\mathcal {G}_n^4$ is a
graph class as shown in Figure 1 $(d)$ such that $\lambda(G)=2$.

\begin{figure}[h,t,b,p]
\begin{center}
\scalebox{0.8}[0.8]{\includegraphics{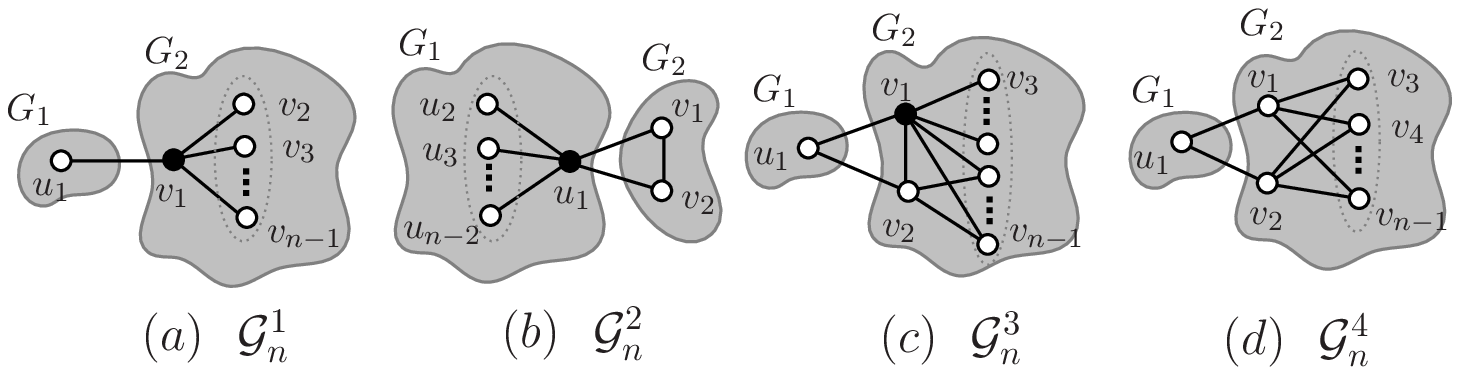}}\\
Figure 1. Graphs for Proposition \ref{pro3} (The degree of a black
vertex is $n-1$).
\end{center}
\end{figure}

The following observation and lemma are some preparations for
Proposition \ref{pro3}.

For $n\geq 5$, let $K_{2,n-2}^{+}$ and $K_{2,n-2}^{++}$ be two
graphs obtained from the complete bipartite graph $K_{2,n-2}$ by
adding one and two edges on the part having $n-2$ vertices,
respectively.

\begin{observation}\label{obs3}
$(1)$ $\lambda_n(K_{2,n-2}^{++})\geq 2$; $(2)$
$\lambda_{n-1}(K_{2,n-2}^{+})\geq 2$,
$\lambda_{n}(K_{2,n-2}^{+})=1$; $(3)$ $\lambda_{n-2}(K_{2,n-2})\geq
2$, $\lambda_{n}(K_{2,n-2})=\lambda_{n-1}(K_{2,n-2})=1$.
\end{observation}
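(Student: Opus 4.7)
The plan is to handle the three parts via explicit constructions of edge-disjoint Steiner trees for the lower bounds, together with edge counting or a short degree argument for the upper bounds. Throughout, I denote the bipartition of $K_{2,n-2}$ by $\{x_1,x_2\}\cup\{y_1,\ldots,y_{n-2}\}$ and assume the added edges lie inside the larger part.

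The upper bounds $\lambda_n(K_{2,n-2}^{+})\le 1$ and $\lambda_n(K_{2,n-2})\le 1$ are immediate from counting: two edge-disjoint spanning trees would require $2(n-1)$ edges, but $K_{2,n-2}^{+}$ has only $2n-3$ edges and $K_{2,n-2}$ has only $2n-4$. Connectedness supplies the matching lower bounds. For $\lambda_{n-1}(K_{2,n-2})\le 1$, take $S=V\setminus\{x_1\}$; every $y_k\in S$ has just the two incident edges $x_1y_k$ and $x_2y_k$, so in any pair of edge-disjoint $S$-Steiner trees each $y_k$ must use one of these edges in $T_1$ and the other in $T_2$. Inside each $T_i$ the $y_k$'s attached to $x_1$ and those attached to $x_2$ cannot be linked (no $y$--$y$ edge exists), so $x_1$ and $x_2$ lie in different components of $T_i$, contradicting $x_2\in S\subseteq V(T_i)$.

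For the lower bounds I exhibit the required trees. For (1), let $e_1,e_2$ be the two added edges and split on whether they share a $y$-endpoint. In the disjoint case $e_1=y_1y_2,\,e_2=y_3y_4$, take $T_1$ to consist of the $x_1$-edges to $\{y_2,y_4,y_5,\ldots,y_{n-2}\}$ together with $x_2y_1,\,y_1y_2,\,y_3y_4$, and let $T_2$ be the complementary set of edges; each is a spanning tree with $n-1$ edges. The shared case $e_1=y_1y_2,\,e_2=y_1y_3$ is handled by a symmetric construction with $y_1$ as pivot. For (2), let $v$ be the omitted vertex. If $v=x_1$, take $T_1=\{x_2y_k:k\ne 2\}\cup\{y_1y_2\}$ and $T_2=\{x_1y_k:k=1,\ldots,n-2\}\cup\{x_2y_2\}$; the case $v=x_2$ is symmetric, and when $v=y_i$ one takes complementary $x_1$- and $x_2$-stars joined through a remaining $y$-vertex, using $y_1y_2$ as the bridge whenever $v\in\{y_1,y_2\}$. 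For (3), let $\{v_1,v_2\}$ be the two omitted vertices. When $v_1=y_i,\,v_2=y_j$, put both edges incident with $y_i$ into $T_1$ together with the $x_1$-star on $\{y_k:k\ne i,j\}$, and dually put both edges incident with $y_j$ into $T_2$ together with the $x_2$-star on $\{y_k:k\ne i,j\}$; when $\{v_1,v_2\}=\{x_1,x_2\}$ take the two bipartite stars; the mixed sub-cases are handled similarly.

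The only real obstacle is bookkeeping: checking in each sub-case that the two chosen subgraphs are edge-disjoint trees both containing $S$. The tightest case is (1), since the construction must use every edge of $K_{2,n-2}^{++}$; this forces the choice of which $y$-vertex serves as the $x_1$--$x_2$ bridge in each tree to be coordinated with the placement of $e_1$ and $e_2$, which is why it is cleanest to split by whether $e_1$ and $e_2$ share an endpoint.
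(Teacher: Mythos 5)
Your proposal is correct and, for the most part, follows the same route as the paper: the lower bounds are obtained by exhibiting explicit edge-disjoint Steiner trees (the paper delegates these to Figure~2, while you write the constructions out and split into the routine sub-cases), and the bound $\lambda_n(K_{2,n-2}^{+})\le 1$ is the same edge count $2(n-2)+1<2(n-1)$; your direct count $2(n-2)<2(n-1)$ for $\lambda_n(K_{2,n-2})$ replaces the paper's appeal to monotonicity (Proposition~\ref{pro2}) from $\lambda_{n-1}$, which is immaterial. The one genuinely different step is the proof that $\lambda_{n-1}(K_{2,n-2})\le 1$: the paper omits a vertex $w_{n-2}$ of the large part and argues in two cases (an $S$-tree through $w_{n-2}$ is spanning and eats $n-1$ of the $2(n-2)$ edges; an $S$-tree avoiding it must use both edges at some $w_i$, blocking a second tree), whereas you omit the small-part vertex $x_1$, observe that each $y_k\in S$ then has degree exactly $1$ in each of two edge-disjoint $S$-trees, and conclude a connectivity contradiction. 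Your variant is arguably cleaner and needs no case split; just tighten the last sentence: since every $y_k$ uses its $x_1$-edge in one of the two trees, some tree $T_i$ contains an edge $x_1y_k$, and since $x_2\in S$ forces an edge $x_2y_j$ in $T_i$ as well, the absence of $y$--$y$ edges and of degree-$2$ $y$-vertices makes $T_i$ disconnected --- the contradiction is with $T_i$ being a tree containing both $x_1$ and $x_2$, not literally with $x_2\in V(T_i)$.
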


\begin{proof}
$(1)$ As shown in Figure 2 $(a)$, $\lambda_n(K_{2,n-2}^{++})\geq 2$.

$(2)$ As shown in Figure 2 $(b)$, we have
$\lambda_{n-1}(K_{2,n-2}^{+})\geq 2$. Since
$|E(K_{2,n-2}^{+})|=2(n-2)+1$, $\lambda_{n}(K_{2,n-2}^{+})\leq
\lfloor\frac{2(n-2)+1}{n-1}\rfloor$, which implies that
$\lambda_{n}(K_{2,n-2}^{+})\leq 1$. Since $K_{2,n-2}^{+}$ is
connected, $\lambda_{n}(K_{2,n-2}^{+})=1$.

$(3)$ As shown in Figure 2 $(c)$, it follows that
$\lambda_{n-2}(K_{2,n-2})\geq 2$. Let $U=\{u_1,u_2\}$ and
$W=\{w_1,w_2, \cdots,w_{n-2}\}$ be two parts of the complete
bipartite graph $K_{2,n-2}$. Choose
$S=\{u_1,u_2,w_1,w_2,\cdots,w_{n-3}\}$. If there exists an $S$-tree
containing vertex $w_{n-2}$, then this tree will use $n-1$ edges of
$E(K_{2,n-2})$, which implies that $\lambda_{n-1}(K_{2,n-2})\leq 1$
since $|E(K_{2,n-2})|=2(n-2)$. Suppose that there is no $S$-tree
containing vertex $w_2$. Pick up a such tree, say $T$. Then there
exists a vertex of degree $2$ in $T$, which implies that there is no
other $S$-tree in $K_{2,n-2}$. So $\lambda_{n-1}(K_{2,n-2})\leq 1$.
Since $K_{2,n-2}$ is connected, $\lambda_{n-1}(K_{2,n-2})=1$. From
Proposition \ref{pro2}, $\lambda_{n}(K_{2,n-2})=1$.
\end{proof}

\begin{figure}[h,t,b,p]
\begin{center}
\scalebox{0.9}[0.9]{\includegraphics{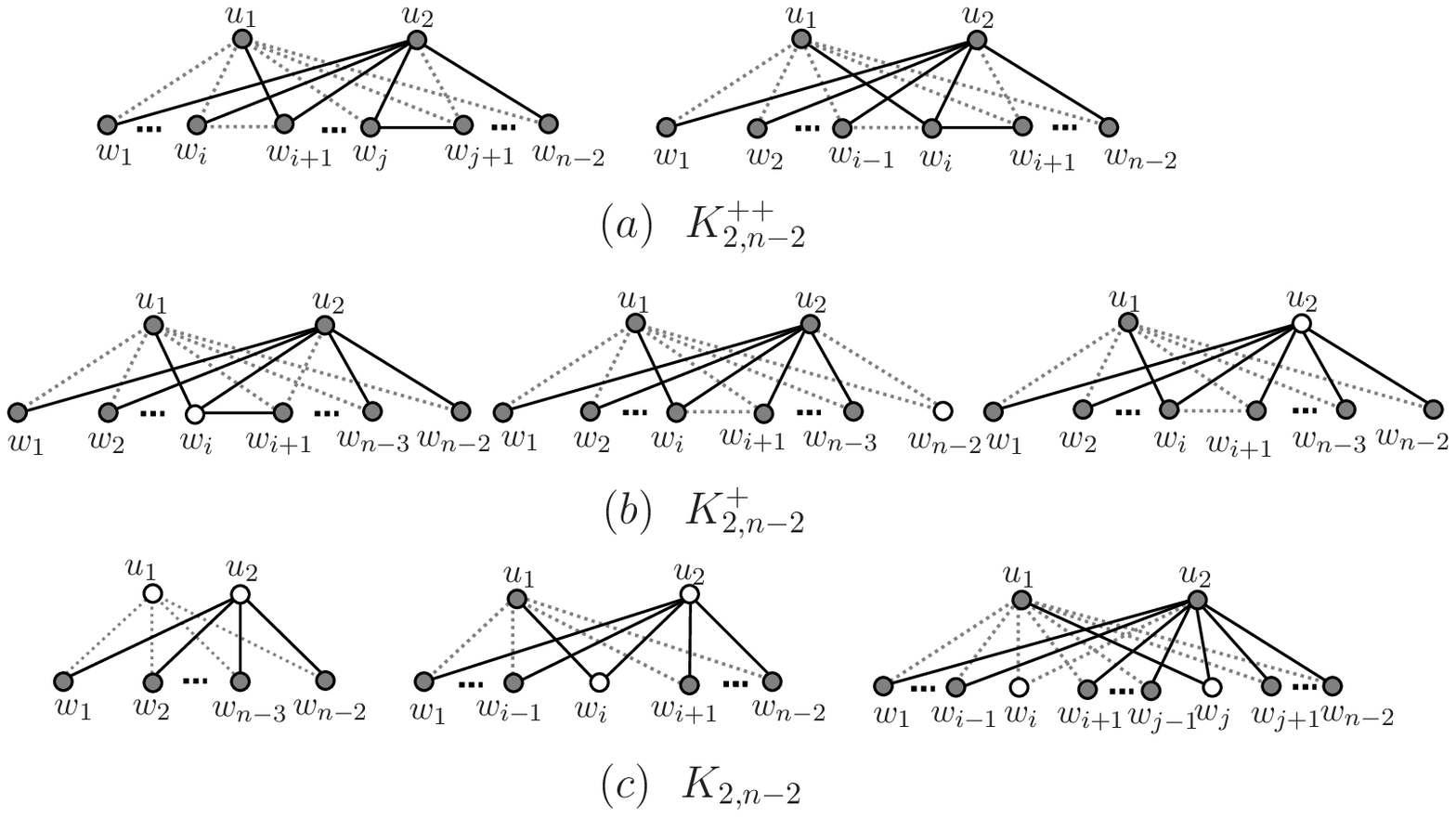}}\\
Figure 2. Graphs for Observation \ref{obs2}.
\end{center}
\end{figure}

\begin{lemma}\label{lem5}
Let $G$ be a connected graph. If $\lambda(G)=3$ and there exists a
vertex $u\in V(G)$ such that $d_G(u)=n-1$, then $\lambda_k(G)\geq 2$
for $3\leq k\leq n$.
\end{lemma}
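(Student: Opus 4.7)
The plan is to reduce the statement to the single case $k=n$. By Proposition \ref{pro2} the sequence $\lambda_k(G)$ is non-increasing in $k$, so it suffices to prove $\lambda_n(G)\ge 2$; indeed, any spanning tree of $G$ is an $S$-Steiner tree for every $S\subseteq V(G)$, so exhibiting two edge-disjoint spanning trees in $G$ already settles the claim for all $3\le k\le n$.

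To produce two edge-disjoint spanning trees I would appeal to the Nash-Williams--Tutte theorem (Theorem \ref{th1}), which reduces the task to verifying $\|G/\mathcal{P}\|\ge 2(p-1)$ for every partition $\mathcal{P}=\{V_1,\ldots,V_p\}$ of $V(G)$. Let $V_1$ denote the part containing the universal vertex $u$, let $E_1$ be the set of edges with exactly one endpoint in $V_1$, and let $E_2$ be the set of edges crossing between distinct parts of $V_2\cup\cdots\cup V_p$. Because $|V_i|\ge 1$ for every $i$, we have $|V_1|\le n-(p-1)$; since $u$ is adjacent to every vertex outside $V_1$, this forces
\[
|E_1|\;\ge\;n-|V_1|\;\ge\;p-1.
\]
On the other hand, $\lambda(G)=3$ means $G$ is $3$-edge-connected, so $|[V_i,V\setminus V_i]|\ge 3$ for each $i\ge 2$. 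Summing these $p-1$ inequalities (each edge of $E_2$ being counted twice and each edge of $E_1$ once) gives $2|E_2|+|E_1|\ge 3(p-1)$. Combining the two estimates,
\[
\|G/\mathcal{P}\|\;=\;|E_1|+|E_2|\;\ge\;\frac{|E_1|+3(p-1)}{2}\;\ge\;\frac{(p-1)+3(p-1)}{2}\;=\;2(p-1),
\]
which is exactly the Nash-Williams--Tutte condition for two edge-disjoint spanning trees.

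The subtle point, and the reason the hypothesis about the universal vertex is essential, is that $3$-edge-connectivity alone only gives $\|G/\mathcal{P}\|\ge 3p/2$, which is $\ge 2(p-1)$ only for $p\le 4$. For partitions into many small parts ($p\ge 5$) the argument genuinely needs the extra crossing edges guaranteed by $u$, and this is precisely the role of the bound $|E_1|\ge p-1$ in the chain above; I would expect this balancing between the two lower bounds to be the main step to get right. Once the two edge-disjoint spanning trees are produced, Proposition \ref{pro2} finishes the proof for every $3\le k\le n$.
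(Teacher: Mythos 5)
Your argument is correct, but it proves the lemma by a genuinely different route than the paper. You verify the Nash--Williams--Tutte partition condition: with $V_1$ the part containing the universal vertex $u$, the bound $|E_1|\ge n-|V_1|\ge p-1$ from the star at $u$, combined with $|E_1|+2|E_2|\ge 3(p-1)$ from summing the cut condition $|[V_i,V\setminus V_i]|\ge\lambda(G)\ge 3$ over the parts $V_2,\dots,V_p$, gives $2\|G/\mathscr{P}\|\ge |E_1|+3(p-1)\ge 4(p-1)$, which is exactly what Theorem \ref{th1} needs for two edge-disjoint spanning trees; your accounting of which edges are counted once versus twice is right, and your remark that $3$-edge-connectivity alone fails for partitions with $p\ge 5$ parts correctly isolates where the universal vertex is used. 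The paper instead constructs the two spanning trees explicitly: since $\delta(G)\ge\lambda(G)=3$ and $u$ is adjacent to everything, each component $G_i$ of $G\setminus u$ has minimum degree at least $2$, hence contains an edge $e_i=v_{i1}v_{i2}$ whose removal keeps $G_i$ connected; one spanning tree of $G$ takes a spanning tree of each $G_i\setminus e_i$ attached to $u$ by the single edge $uv_{i1}$, and the other takes $e_i$ together with the remaining star edges $uv_{i2},\dots,uv_{in_i}$. The paper's construction is elementary and self-contained (it needs no packing theorem at all), while your counting argument is shorter to check, makes clear that only $\lambda(G)\ge 3$ is used, and adapts more readily to similar hypotheses pairing edge-connectivity with a dominating vertex; both proofs finish identically by reducing $3\le k\le n$ to $k=n$ via Proposition \ref{pro2}.
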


\begin{proof}
Let $G_1,\cdots,G_r$ be the connected components of $G\setminus u$.
Since $\lambda(G)=3$, it follows that $\delta (G_i)\geq 2 \ (1\leq
i\leq r)$. Let $|V(G_i)|=n_i \ (1\leq i\leq r)$ and
$V(G_i)=\{v_{i1},v_{i2},\cdots, v_{in_i}\}$. Then there exists an
edge, without loss of generality, say $e_i=v_{i1}v_{i2}\in E(G_i)$
such that $G_i\setminus e_i$ is connected for $1\leq i\leq r$. Thus
$G_i\setminus e_i$ contains a spanning tree, say $T_i \ (1\leq i\leq
r)$. The trees $T=uv_{11}\cup T_1\cup uv_{21}\cup T_2\cup \cdots
\cup uv_{r1}\cup T_r$ and $T'=v_{11}v_{12}\cup uv_{12}\cup \cdots
\cup uv_{1n_1}\cup v_{21}v_{22}\cup uv_{22}\cup \cdots \cup
uv_{2n_2}\cup \cdots \cup v_{r1}v_{r2}\cup uv_{r2}\cup \cdots \cup
uv_{rn_r}$ are two spanning trees of $G$, that is, $\lambda_n(G)\geq
2$. Combining this with Proposition \ref{pro2}, $\lambda_k(G)\geq 2$
for $3\leq k\leq n$.
\end{proof}

\begin{proposition}\label{pro3}
$\lambda_k(G)+\lambda_k(\overline{G})=1$ if and only if $G$
(symmetrically, $\overline{G}$) satisfies one of the following
conditions:

$(1)$ $G\in \mathcal {G}_n^1$ or $G\in \mathcal {G}_n^2$;

$(2)$ $G\in \mathcal {G}_n^3$ and there exists a component $G_i$ of
$G\setminus v_1$ such that $G_i$ is a tree and $|V(G_i)|<k$;

$(3)$ $G\in \{K_{2,n-2}^{+},K_{2,n-2}\}$ for $k=n$ and $n\geq 5$, or
$G\in \{P_3,C_3\}$ for $k=n=3$, or $G\in \{C_4,K_4\setminus e\}$ for
$k=n=4$, or $G=K_{3,3}$ for $k=n=6$, or $G=K_{2,n-2}$ for $k=n-1$
and $n\geq 5$, or $G=C_4$ for $k=n-1=3$.
\end{proposition}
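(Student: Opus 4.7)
The plan is to first reduce the characterization to a much cleaner statement: since $\lambda_k$ is nonnegative and equals $0$ exactly when the graph is disconnected (and is $\geq 1$ otherwise by Observation \ref{obs1}(1)), and since $G$ and $\overline{G}$ cannot both be disconnected, the equality $\lambda_k(G)+\lambda_k(\overline{G})=1$ holds if and only if exactly one of the two graphs is disconnected (say $\overline{G}$, by the symmetry in the statement) and the other satisfies $\lambda_k(G)=1$. All subsequent analysis reduces to characterizing connected $G$ on $n$ vertices with $\lambda_k(G)=1$ whose complement is disconnected.

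For the sufficiency direction, I would go through each of the conditions (1)--(3) and verify the two required facts separately. In each case the vertex of degree $n-1$ in $G$ (black vertex in Figure 1) immediately guarantees $\overline{G}$ has an isolated vertex, hence $\lambda_k(\overline{G})=0$; for $G\in\mathcal{G}_n^1$ this is combined with $\lambda_k(G)\leq\lambda(G)=1$ from Observation \ref{obs1}(1) and connectedness to pin down $\lambda_k(G)=1$. For $\mathcal{G}_n^2$ and the restricted version of $\mathcal{G}_n^3$, the bound $\lambda_k(G)\leq 1$ comes from the small tree-component condition (an $S$ meeting the tree component fully forces every Steiner tree to use the unique bridge-like edge into it) together with an edge-counting argument of the type used in Observation \ref{obs3}(2)(3). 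For the sporadic entries in (3), I would verify $\lambda_k$ directly by an edge-count $\lambda_n(G)\leq\lfloor|E(G)|/(n-1)\rfloor$ plus explicit exhibition of one spanning Steiner tree, mirroring the arguments of Observation \ref{obs3}, and dispose of the small cases ($n=3,4,6$) by finite inspection.

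For the necessity direction, assuming $\overline{G}$ disconnected and $\lambda_k(G)=1$, I would split on $\lambda(G)$. If $\lambda(G)=1$, then $G$ has a bridge, and the disconnectedness of $\overline{G}$ combined with the existence of a bridge forces the vertex $v_1$ of degree $n-1$ to sit on the bridge, yielding $G\in\mathcal{G}_n^1$. If $\lambda(G)=2$, I would first observe that $\overline{G}$ must contain an isolated vertex or a component of size small enough to force $\lambda_k(G)\leq 1$; from this one reads off the pictures and places $G$ in $\mathcal{G}_n^2$, $\mathcal{G}_n^3$ with the extra tree-component condition, or in the listed exceptional graphs at $k=n$ and $k=n-1$ (here Observation \ref{obs3} is the workhorse). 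Finally, if $\lambda(G)\geq 3$, I would invoke Lemma \ref{lem5} applied to the vertex of degree $n-1$ (whose existence is forced whenever $\overline{G}$ has an isolated vertex) to conclude $\lambda_k(G)\geq 2$, contradicting $\lambda_k(G)=1$; the remaining subcase is when $\overline{G}$ has no isolated vertex but still disconnects, which restricts $G$ so strongly (a spanning complete bipartite or multipartite subgraph of non-trivial parts) that the only survivors are the $K_{2,n-2}$-type graphs and $K_{3,3}$ listed in (3), again via Observation \ref{obs3}.

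The main obstacle is the case $\lambda(G)=2$, where neither Observation \ref{obs1}(1) nor Lemma \ref{lem5} is sharp enough by itself: one really has to dissect the block-structure of $G$ at the vertex of degree $n-1$ (or, when no such vertex exists, at the bipartition witnessing the disconnection of $\overline{G}$) and perform a careful edge-counting to decide when a second edge-disjoint $S$-Steiner tree can be constructed. Handling the small-order exceptional graphs in part (3), and ensuring that condition (2) captures precisely those $\mathcal{G}_n^3$-graphs for which the Steiner-tree construction used in Lemma \ref{lem5} fails, will require the most care.
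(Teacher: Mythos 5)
Your outline follows the same route as the paper: reduce to ``one of $G,\overline{G}$ is disconnected and the other satisfies $\lambda_k=1$'', bound $\lambda(G)$, and split into the cases $\lambda(G)=1,2,\geq 3$, using Observation~\ref{obs1}, Observation~\ref{obs3}, Lemma~\ref{lem5}, monotonicity and edge counts. So the approach is not different; the problem is that the decisive steps are announced rather than carried out, and they are exactly where the content of the proposition lies. The whole point of condition $(2)$ is the criterion that a graph $G\in\mathcal{G}_n^3$ already has $\lambda_k(G)\geq 2$ whenever every component of $G\setminus v_1$ either has at least $k$ vertices or contains a cycle; the paper proves this as Claim 1 by an explicit construction of two edge-disjoint $S$-trees (in each component, a spanning tree avoiding a suitable vertex or edge, paired with the star at $v_1$, the pieces glued at $v_1$). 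You note that one must determine ``precisely those $\mathcal{G}_n^3$-graphs for which the construction fails'' but supply no argument, so the boundary between condition $(2)$ and the rest of $\mathcal{G}_n^3$ --- i.e.\ the statement to be proved --- is never established. Likewise, in the no-dominating-vertex subcases you assert that ``the only survivors'' are the $K_{2,n-2}$-type graphs and $K_{3,3}$, but the eliminations that make this true are the nontrivial part: one needs Observation~\ref{obs3}$(1)$ to exclude every graph containing $K_{2,n-2}^{++}$, Lemma~\ref{lem1}$(1)$ to get $\lambda_n(G)\geq 2$ when $K_{3,n-3}\subseteq G$ and $n\geq 7$, and an inspection of the four $6$-vertex supergraphs of $K_{3,3}$ (the paper's $H_1,\dots,H_4$), including the check $\lambda_5(K_{3,3})\geq 2$ that restricts $K_{3,3}$ to $k=n=6$. ``Careful edge-counting \dots will require the most care'' is a placeholder for this, not a proof.

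Two smaller points. Lemma~\ref{lem5} is stated only for $\lambda(G)=3$, so invoking it for all $\lambda(G)\geq 3$ needs either the remark that its proof only uses $\lambda(G)\geq 3$, or (as the paper does) Proposition~\ref{pro1} to kill $\lambda(G)\geq 4$ at the outset via $\lambda_k(G)\geq\lfloor\lambda(G)/2\rfloor\geq 2$. Also, in your sufficiency paragraph the blanket claim that the degree-$(n-1)$ vertex makes $\overline{G}$ have an isolated vertex does not apply to the graphs of condition $(3)$ ($K_{2,n-2}$, $K_{2,n-2}^{+}$, $K_{3,3}$, $C_4$, \dots), whose complements are disconnected for a different reason (disjoint unions of cliques, possibly minus a matching edge); this is harmless but indicates those cases were not actually checked.
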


\begin{proof}

\emph{Necessity}. Let $G$ be a graph satisfying one of the
conditions of $(1)$, $(2)$ and $(3)$. One can see that $G$ is
connected and its complement $\overline{G}$ is disconnected. Thus
$\lambda_k(G)+\lambda_k(\overline{G})=\lambda_k(G)$ and
$\lambda_k(G)\geq 1$. We only need to show that $\lambda_k(G)\leq 1$
for each graph $G$ satisfying one of the conditions of $(1)$, $(2)$
and $(3)$. For $G\in \mathcal {G}_n^1$, since $\delta(G)=1$ we have
$\lambda_k(G)\leq 1$ by $(1)$ of Observation \ref{obs1}. For $G\in
\mathcal {G}_n^2$, it follows that $\lambda_k(G)\leq \delta(G)-1=1$
by $(3)$ of Observation \ref{obs1} since
$d_G(v_1)=d_G(v_2)=\delta(G)=2$. Suppose $G\in \mathcal {G}_n^3$ and
there exists a connected component $G_i$ of $G\setminus v_1$ such
that $G_i$ is a tree and $|V(G_i)|<k$. Set
$V(G_i)=\{v_{i1},v_{i2},\cdots,v_{in_i}\}$. We choose $S\subseteq
V(G)$ such that $V(G_i)\cup \{v_{1}\}=S'\subseteq S$. Then
$|E(G[S'])|=2n_i-1$. Since every spanning tree of $G[S']$ uses
$n_i-1$ edges of $E(G[S'])$, there exists at most one spanning tree
of $G[S']$, which implies that there is at most one tree connecting
$S$ in $G$. So $\lambda_k(G)\leq 1$. For $G=K_{2,n-2}^{+}$,
$\lambda_n(G)=1$ by $(2)$ of Observation \ref{obs3}. For
$G=K_{2,n-2}$, by $(3)$ of Observation \ref{obs3}, we have
$\lambda_{n}(K_{2,n-2})=\lambda_{n-1}(K_{2,n-2})=1$. For
$G=K_{3,3}$, $\lambda_n(G)\leq
\lfloor\frac{|E(G)|}{n-1}\rfloor=\lfloor\frac{9}{5}\rfloor=1$. For
$G\in \{P_3, C_3, C_4, K_4\setminus e\}$, one can check that
$\lambda_k(G)\leq 1$ for $k=n$ or $k=n-1$. From these together with
$\lambda_k(G)\geq 1$, we have
$\lambda_k(G)+\lambda_k(\overline{G})=\lambda_k(G)=1$.

\emph{Sufficiency}. Suppose
$\lambda_k(G)+\lambda_k(\overline{G})=1$. Then $\lambda_k(G)=1$ and
$\lambda_k(\overline{G})=0$, or $\lambda_k(\overline{G})=1$ and
$\lambda_k(G)=0$. By symmetry, without loss of generality, we let
$\lambda_k(G)=1$ and $\lambda_k(\overline{G})=0$. From these
together with Proposition \ref{pro1}, $\lambda(\overline{G})=0$ and
$1\leq \lambda(G)\leq 3$. So we have the following three cases to
consider.

\emph{Case 1.}~~$\lambda(G)=1$.

For $n=3$, one can check that $G=P_3$ satisfies $\lambda(G)=1$ but
$\lambda(\overline{G})=0$. Now we assume $n\geq 4$. Since
$\lambda(G)=1$, there exists at least one cut edge in $G$, say
$e=u_1v_1$. Let $G_1$ and $G_2$ be two connected components of
$G\setminus e$ such that $u_1\in V(G_1)$ and $v_1\in V(G_2)$. Set
$V(G_1)=\{u_1,u_2,\cdots,u_{n_1}\}$ and
$V(G_2)=\{v_1,v_2,\cdots,v_{n_2}\}$, where $n_1+n_2=n$. Suppose
$n_i\geq 2 \ (i=1,2)$. For any $u_i,u_j\in V(G_1)$, $u_i$ and $u_j$
are connected in $\overline{G}$ since there exists a path
$u_iv_2u_j$ in $\overline{G}$; for any $v_i,v_j\in V(G_2)$, $v_i$
and $v_j$ are connected in $\overline{G}$ since there exists a path
$v_iu_2v_j$ in $\overline{G}$; for any $u_i\in V(G_1)$ and $v_j\in
V(G_2)$ ($i\neq 1$ or $j\neq 1$), $v_iv_j\in E(\overline{G})$.
Clearly, the path $u_1v_2u_2v_1$ connects $u_1$ and $v_1$ in
$\overline{G}$. So $\overline{G}$ is connected, a contradiction.
Thus $n_1=1$ or $n_2=1$. Without loss of generality, let $n_1=1$.
Then $V(G_1)=\{u_1\}$ and $V(G_2)=\{v_1,v_2,\cdots,v_{n-1}\}$.
Clearly, $G$ is a graph obtained from $G_2$ by attaching the edge
$e=u_1v_1$. Since $u_1v_j\notin E(G) \ (1\leq j\leq n-1)$,
$u_1v_j\in E(\overline{G})$. If $d_{G}(v_1)\leq n-2$, then there
exists one vertex $v_j$ such that $v_1v_j\in E(\overline{G})$, which
results in $\lambda(\overline{G})\geq 1$, a contradiction. So
$d_{G}(v_1)=n-1$ and $G\in \mathcal {G}_n^1$ (See Figure 1 $(a)$).

\emph{Case 2.}~~$\lambda(G)=2$.

For $n=3,4$, the graph $G\in \{C_3,C_4,K_4\setminus e\}$ satisfies
that $\lambda(G)=2$ and $\lambda(\overline{G})=0$. Since
$\lambda_3(C_3)=1$, $\lambda_3(C_4)=1$, $\lambda_4(C_4)=1$,
$\lambda_3(K_4\setminus e)=2$ and $\lambda_4(K_4\setminus e)=1$, we
have $G=C_3$ for $k=n=3$; $G\in \{C_4,K_4\setminus e\}$ for $k=n=4$;
$G=C_4$ for $k=n-1=3$. Now we assume $n\geq 5$. Since
$\lambda(G)=2$, there exists an edge cut $M$ such that $|M|=2$. Let
$G_1$ and $G_2$ be two connected components of $G\setminus M$,
$V(G_1)=\{u_1,\cdots,u_{n_1}\}$ and $V(G_2)=\{v_1,\cdots,v_{n_2}\}$,
where $n_1+n_2=n$. Clearly, $G[M]=2K_2$ or $G[M]=P_3$.

At first, we consider the case $G[M]=2K_2$. Without loss of
generality, let $M=\{u_1v_1,u_2v_2\}$. Since $n\geq 5$, $n_1\geq 3$
or $n_2\geq 3$. Without loss of generality, let $n_1\geq 3$.
Clearly, any two vertices $v_i,v_j\in V(G_2)$ are connected in
$\overline{G}$ since there exists a path $v_iu_3v_j$ in
$\overline{G}$. Furthermore, for any $u_i\in V(G_1)$, $u_iv_1\in
E(\overline{G})$ or $u_iv_2\in E(\overline{G})$. So $\overline{G}$
is connected and $\lambda(\overline{G})\geq 1$, a contradiction.

Next, we consider the case $G[M]=P_3$. Without loss of generality,
let $P=v_1u_1v_2$ be the path of order $3$. Since $n\geq 5$, there
exist at least two vertices in $G\setminus \{u_1,v_1,v_2\}$. If
$n_1\geq 2$ and $n_2\geq 3$, then we can check that $\overline{G}$
is connected, a contradiction. So we assume that $n_1=1$ or $n_2=2$,
that is, $V(G_2)=\{v_1,v_2\}$ or $V(G_1)=\{u_1\}$.

For the former, $V(G_1)=\{u_1,u_2,\cdots,u_{n-2}\}$. Since
$\lambda(G)=2$, $v_1v_2\in E(G)$. Clearly, $v_1u_j,v_2u_j\notin E(G)
\ (2\leq j\leq n-2)$, which implies that $v_1u_j,v_2u_j\in
E(\overline{G})$. Therefore, $u_1u_j\notin E(\overline{G}) \ (2\leq
j\leq n-2)$ since $\overline{G}$ is disconnected. Thus $u_1u_j\in
E(G)$ for each $j \ (2\leq j\leq n-2)$. So $d_G(u_1)=n-1$ and $G\in
\mathcal {G}_n^2$ (See Figure 1 $(b)$).

For the latter, let $V(G_2)=\{v_1,v_2,\cdots,v_{n-1}\}$. First we
consider the case $v_1v_2\in E(G)$. Since $u_1v_j\notin E(G) \
(3\leq j\leq n-1)$, we have $u_1v_j\in E(\overline{G})$. If $3\leq
d_G(v_1)\leq n-2$ and $3\leq d_G(v_2)\leq n-2$, then there exist two
vertices $v_i$ and $v_j$ such that $v_1v_i,v_2v_j\in E(\overline{G})
\ (3\leq i,j\leq n-1)$, which implies that $\overline{G}$ is
connected, a contradiction. So $d_G(v_1)= n-1$ or $d_G(v_2)=n-1$.
Without loss of generality, let $d_G(v_1)=n-1$. Thus $G\in \mathcal
{G}_n^3$ (See Figure 1 $(c)$). Now we focus on the graph $G\setminus
v_1$. Let $G_1,G_2,\cdots, G_{r}$ be the connected components of
$G\setminus v_1$ and $V(G_i)=\{v_{i1},v_{i2},\cdots,v_{in_i}\} \
(1\leq i\leq r)$, where $\sum_{i=1}^rn_i=n-1$. If there exists some
connected component $G_i$ such that $G_i=K_2$, then $G\in \mathcal
{G}_n^2$ (See Figure 1 $(b)$). So we assume $n_i\geq 3$. Then we
prove the following claim and get a contradiction.

\noindent {\bf Claim 1}. For each connected component $G_i$ of
$G\setminus v_1$, if $n_i\geq k$, or $n_i\leq k-1$ and $|E(G_i)|\geq
n_i$, then $\lambda_k(G)\geq 2$ for $3\leq k\leq n$.

\noindent{\itshape Proof of Claim $1$.} For an arbitrary $S\subseteq
V(G)$ with $|S|=k$, we only prove $\lambda(S)\geq 2$ for $v_1\notin
S$. The case for $v_1\in S$ can be proved similarly. If there exists
some connected component $G_i$ such that $S=V(G_i)$, then $n_i=k$
and $G_i$ has a spanning tree, say $T_i$. It is also a Steiner tree
connecting $S$. Since $T_i'=v_1v_{i1}\cup v_1v_{i2}\cdots \cup
v_1v_{in_i}$ is another Steiner tree connecting $S$ and $T_i, T_i'$
are two edge-disjoint trees, we have $\lambda(S)\geq 2$. Let us
assume now $S\neq V(G_i)$ for $n_i\geq k \ (1\leq i\leq r)$. Let
$S_i=S\cap V(G_i)\ (1\leq i\leq r)$ and $|S_i|=k_i$. Clearly,
$\bigcup_{i=1}^rS_i=S$ and $\sum_{i=1}^rk_i=k$. Thus $S_i\subset
V(G_i)$ for each connected component $G_i$ such that $n_i\geq k$,
and $S_j\subseteq V(G_j)$ for each connected component $G_j$ such
that $n_j\leq k-1$ and $|E(G_j)|\geq n_j$. We will show that there
are two edge-disjoint Steiner trees connecting $S_i\cup \{v_1\}$ in
$G[S_i\cup \{v_1\}]$ for each $i \ (1\leq i\leq r)$ so that we can
combine these trees to form two edge-disjoint Steiner trees
connecting $S$ in $G$. Suppose that $G_i$ is a connected component
such that $n_i\geq k$. Note that
$V(G_i)=\{v_{i1},v_{i2},\cdots,v_{in_i}\}$. Since $S_i\subset
V(G_i)$, there exists a vertex, without loss of generality, say
$v_{i1}$, such that $v_{i1}\notin S_i$. Clearly, $G_i$ contains a
spanning tree, say $T_{i1}'$. Thus $T_{i1}=v_1v_{i1}\cup T_{i1}'$ is
a Steiner tree connecting $S_i\cup \{v_1\}$ in $G[G_i\cup \{v_1\}]$.
Since $T_{i2}=v_1v_{i2}\cup v_1v_{i3}\cup \cdots \cup v_1v_{in_i}$
is another Steiner tree connecting $S_i\cup \{v_1\}$. Clearly,
$T_{i1}$ and $T_{i2}$ are edge-disjoint. Assume that $G_j$ is a
connected component such that $n_j\leq k-1$ and $|E(G_j)|\geq n_j$.
Note that $V(G_j)=\{v_{j1},v_{j2},\cdots,v_{jn_j}\}$. Then there
exists an edge, without loss of generality, say $e_j=v_{j1}v_{j2}\in
E(G_j)$ such that $G_j\setminus e_j$ contains a spanning tree of
$G_j$, say $T_{j1}'$. Thus $T_{j1}=v_1v_{j1}\cup T_{j1}'$ and
$T_{j2}=v_{j1}v_{j2}\cup v_{1}v_{j2}\cup \cdots \cup v_{1}v_{jn_j}$
are two edge-disjoint Steiner trees connecting $S_j\cup \{v_{1}\}$.
Now we combine these small trees connecting $S_i\cup \{v_1\} \
(1\leq i\leq r)$ by the vertex $v_1$ to form two big trees
connecting $S$. Clearly, $T_1=T_{11}\cup T_{21}\cup \cdots \cup
T_{r1}$ and $T_2=T_{12}\cup T_{22}\cup \cdots \cup T_{r2}$ are our
desired trees, that is, $\lambda(S)\geq 2$. From the arbitrariness
of $S$, we have $\lambda_k(G)\geq 2$.\hfill$\Box$\vspace{10pt}

By Claim $1$, we know that $G\in \mathcal {G}_n^3$ and there exists
a connected component $G_i$ of $G\setminus \{v_1\}$ such that
$n_i\leq k-1$ and $G_i$ is a tree.

We next consider the case $v_1v_2\notin E(G)$ (See Figure 1 $(d)$).
Thus $v_1v_2\in E(\overline{G})$. Since $u_1v_j\notin E(G) \ (3\leq
j\leq n-1)$, $u_1v_j\in E(\overline{G})$, which results in
$v_1v_j,v_2v_j\notin E(\overline{G})$ since $\overline{G}$ is
disconnected. Thus $v_1v_j,v_2v_j\in E(G)$ for each $j \ (3\leq
j\leq n-1)$. Let $R=\{v_j|3\leq j\leq n-1\}$. If $|E(G[R])|\geq 2$,
then $G$ contains a subgraph $K_{2,n-2}^{++}$, which implies that
$\lambda_n(G)\geq 2$ by $(1)$ of Observation \ref{obs3}. Combining
this with Proposition \ref{pro2}, $\lambda_k(G)\geq 2$ for $3\leq
k\leq n$, a contradiction. If $|E(G[R])|<2$, then $G=K_{2,n-2}$ and
$K_{2,n-2}^{+}$. From Observation \ref{obs3} and Proposition
\ref{pro2}, we have $\lambda_k(K_{2,n-2}^{+})\geq 2$ for $3\leq
k\leq n-1$ and $\lambda_k(K_{2,n-2})\geq 2$ for $3\leq k\leq n-2$, a
contradiction. So $G=K_{2,n-2}^{+}$ for $k=n$, or $G=K_{2,n-2}$ for
$k=n$, or $G=K_{2,n-2}$ for $k=n-1$.

\emph{Case 3.}~~$\lambda(G)=3$.

For $n=4$, $G=K_4$, $\lambda_3(G)=\lambda_4(G)=2$ by Lemma
\ref{lem3}. Then $\lambda_k(G)\geq 2$, a contradiction. Assume
$n\geq 5$.
\begin{figure}[h,t,b,p]
\begin{center}
\scalebox{0.8}[0.8]{\includegraphics{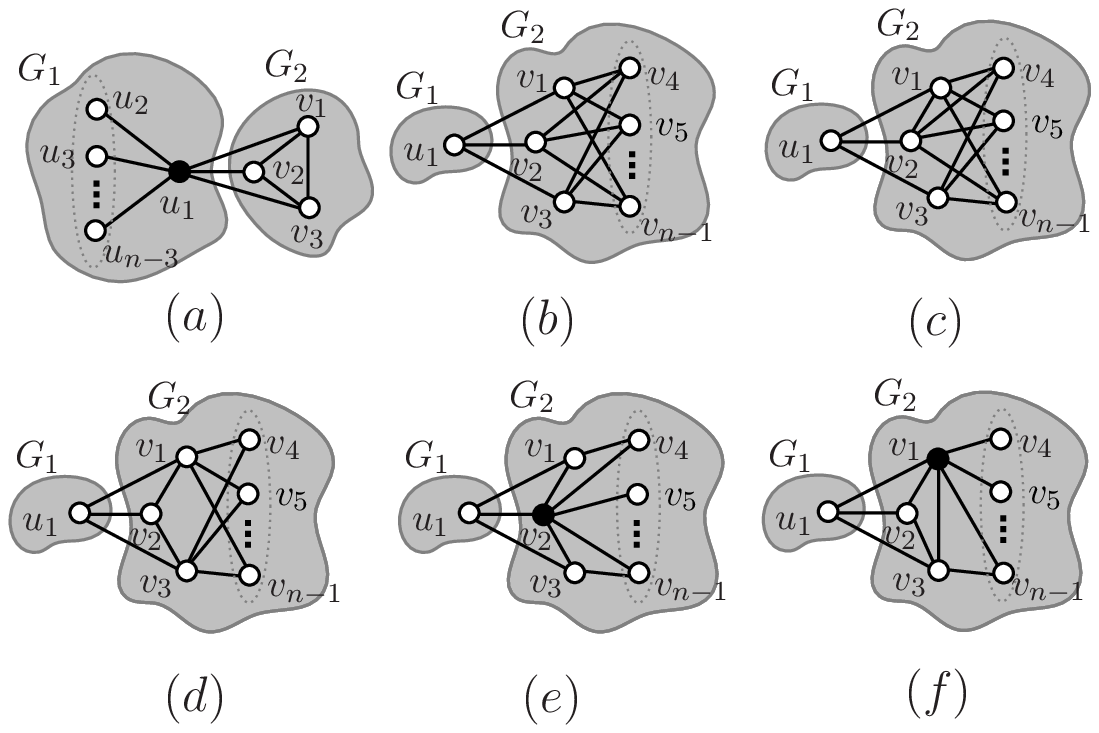}}\\
Figure 3. Graphs for Case 3 of Proposition \ref{pro3}.
\end{center}
\end{figure}
Since $\lambda(G)=3$, there exists an edge cut $M$ such that
$|M|=3$. Let $G_1$ and $G_2$ be two connected components of
$G\setminus M$, $V(G_1)=\{u_1,u_2,\cdots,u_{n_1}\}$ and
$V(G_2)=\{v_1,v_2,\cdots,v_{n_2}\}$, where $n_1+n_2=n$. Clearly,
$G[M]=P_4$ or $G[M]=P_3\cup K_2$ or $G[M]=3K_2$ or $G[M]=K_{1,n-3}$.
For the former three cases, $n_i\geq 3 \ (i=1,2)$ and $n\geq 6$
since $\lambda(G)=3$. To shorten the discussion, we only prove
$\lambda(\overline{G})\geq 1$ for $G[M]=P_4$ and get a contradiction
among the former three cases. Without loss of generality, let
$G[M]=P_4=u_1v_1u_2v_2$. For any $u_i,u_j\in V(G_1) \ (1\leq i\leq
n_1)$, $u_i$ and $u_j$ are connected in $\overline{G}$ since there
exists a path $u_iv_3u_j$ in $\overline{G}$; for any $v_i,v_j\in
V(G_2) \ (1\leq i\leq n_2)$, $v_i$ and $v_j$ are connected in
$\overline{G}$ since there exists a path $v_iu_3v_j$ in
$\overline{G}$; for any $u_i\in V(G_1)$ and $v_j\in V(G_2)$($i\neq
3$ and $j\neq 3$), $u_i$ and $u_j$ are connected in $\overline{G}$
since there exists a path $u_iv_3u_3v_j$ in $\overline{G}$. Since
$u_3v_j\in E(\overline{G}) \ (1\leq j\leq n_2)$ and $v_3u_i\in
E(\overline{G}) \ (1\leq i\leq n_1)$, $\overline{G}$ is connected, a
contradiction.

Now we consider the graph $G$ such that $G[M]=K_{1,n-3}$. Assume
$n_1\geq 2$. If $n_2\geq 4$, then we can check that $\overline{G}$
is connected and get a contradiction. Therefore, $n_2=3$,
$V(G_2)=\{v_1,v_2,v_3\}$ and $V(G_1)=\{u_1,u_2\cdots, u_{n-3}\}$.
Since $\lambda(G)=3$, it follows that $v_1v_2,v_2v_3,v_1v_3\in
E(G)$. Since $v_iu_j\notin E(G) \ (1\leq i\leq 3, \ 2\leq j\leq
n-3)$, we have $v_iu_j\in E(\overline{G})$. If there exists some
vertex $u_j \ (2\leq j\leq n-3)$ such that $u_1u_j\in
E(\overline{G})$, then $\overline{G}$ is connected, a contradiction.
So $u_1u_j\in E(G)$ for $2\leq j\leq n-3$. Thus $d_G(u_1)=n-1$ (See
Figure 3 $(a)$). From Lemma \ref{lem5}, $\lambda_k(G)\geq 2$ for
$3\leq k\leq n$ since $\lambda(G)=3$, a contradiction.

Let us now assume $n_1=1$. Then $V(G_1)=\{u_1\}$ and
$V(G_2)=\{v_1,v_2\cdots, v_{n-1}\}$. If $G[\{v_1,v_2,v_3\}]=3K_1$ or
$G[\{v_1,v_2,v_3\}]=2K_1\cup K_2$, then we have $u_1v_j\in
E(\overline{G})$ since $u_1v_j\notin E(G) \ (4\leq j\leq n-1)$. From
this together with the fact that $\overline{G}$ is disconnected and
$v_1v_3,v_2v_3\in E(\overline{G})$, $v_iv_j\notin E(\overline{G}) \
(1\leq i\leq 3, \ 4\leq j\leq n-1)$, we have that $v_iv_j\in E(G) \
(1\leq i\leq 3, \ 4\leq j\leq n-1)$. Thus $G$ contains a complete
bipartite graph $K_{3,n-3}$ as its subgraph (See Figure 3 $(b)$ and
$(c)$). From $(1)$ of Lemma \ref{lem1},
$\lambda_n(G)=\lfloor\frac{3(n-3)}{n-1}\rfloor\geq 2$ for $n\geq 7$,
which implies $\lambda_k(G)\geq 2$ for $3\leq k\leq n$ and $n\geq
7$. Since $\lambda(G)=3$, $n\geq 6$. So we only need to consider the
case $n=6$. Thus $G=H_i \ (1\leq i\leq 4)$ (See Figure 4). If $G=H_i
\ (2\leq i\leq 4)$, then $\lambda_n(G)\geq 2$ for $k=n=6$ (See
Figure 4 $(b),(c),(d)$). Therefore $\lambda_k(G)\geq 2$ for $3\leq
k\leq 6$. If $G=H_1$, then $\lambda_n(G)\leq
\lfloor\frac{|E(G)|}{n-1}\rfloor=\lfloor\frac{9}{5}\rfloor=1$ for
$k=n=6$. For $k=5$, we can check that $\lambda_3(G)\geq
\lambda_4(G)\geq \lambda_5(G)\geq 2$ (See Figure 4 $(e)$). So
$G=K_{3,3}$ for $k=n=6$.

\begin{figure}[h,t,b,p]
\begin{center}
\scalebox{0.9}[0.9]{\includegraphics{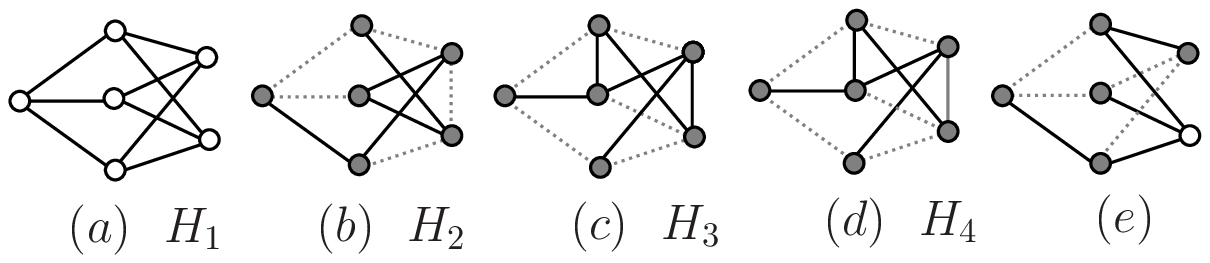}}\\
Figure 4. Graphs for Case 3 of Proposition \ref{pro3}.
\end{center}
\end{figure}

Suppose $G[\{v_1,v_2,v_3\}]=P_3$. Without loss of generality, let
$v_1v_2,v_2v_3\in E(G)$. If $3\leq d_G(v_2)\leq n-2$ (See Figure 3
$(d)$), then there exists at least one vertex $v_j$ such that
$v_2v_j\in E(\overline{G})$, which results in $v_1v_j,v_3v_j\notin
E(\overline{G}) \ (4\leq j\leq n-1)$ since $u_1v_j\in
E(\overline{G}) \ (4\leq j\leq n-1)$, $v_1v_3\in E(\overline{G})$
and $\overline{G}$ is disconnected. Thus $v_1v_j,v_3v_j\in E(G)$ for
each $j \ (4\leq j\leq n-1)$. Since $d(v_4)\geq \delta(G)\geq
\lambda(G)=3$, we have $v_4v_2\in E(G)$ or there exists some vertex
$v_j \ (5\leq j\leq n-1)$ such that $v_4v_j\in E(G)$, which implies
that $G$ contains a subgraph $K_{2,n-2}^{++}$ and so
$\lambda_n(G)\geq 2$ by $(1)$ of Observation \ref{obs3}. From
Proposition \ref{pro2}, $\lambda_k(G)\geq 2$ for $3\leq k\leq n$, a
contradiction. If $d_G(v_2)=n-1$ (See Figure 3 $(e)$), then
$\lambda_k(G)\geq 2$ for $3\leq k\leq n$ by Lemma \ref{lem5} since
$\lambda(G)=3$, a contradiction.

Suppose $G[\{v_1,v_2,v_3\}]=K_3$. Without loss of generality, let
$v_1v_2,v_1v_3,v_2v_3\in E(G)$. If $d_G(v_1)=n-1$ or $d_G(v_2)=n-1$
or $d_G(v_3)=n-1$ (See Figure 3 $(f)$), then by Lemma \ref{lem5}
$\lambda_k(G)\geq 2$ for $3\leq k\leq n$ since $\lambda(G)=3$, a
contradiction. If $3\leq d_G(v_i)\leq n-2(1\leq i\leq 3)$, then
$\overline{G}$ is connected, a contradiction.
\end{proof}

We now investigate the upper bounds of
$\lambda_k(G)+\lambda_k(\overline{G})$ and $\lambda_k(G)\cdot
\lambda_k(\overline{G})$.

\begin{lemma}\label{lem6}
Let $G\in \mathcal {G}(n)$. Then

$(1)$ $\lambda_k(G)+\lambda_k(\overline{G})\leq n-\lceil k/2
\rceil$;

$(2)$ $\lambda_k(G)\cdot \lambda_k(\overline{G})\leq
\big[\frac{n-\lceil k/2 \rceil}{2}\big]^2$.

Moreover, the two upper bounds are sharp.
\end{lemma}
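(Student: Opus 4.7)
The plan is to derive both upper bounds from the generalized $k$-edge-connectivity of the complete graph $K_n$, which Lemma~\ref{lem3} gives as $n-\lceil k/2\rceil$. The crucial observation is that $E(G)$ and $E(\overline{G})$ partition $E(K_n)$, so any family of edge-disjoint Steiner trees in $G$ and any family of edge-disjoint Steiner trees in $\overline{G}$ are automatically edge-disjoint from each other inside $K_n$.

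For part $(1)$, I would fix any $S\subseteq V(G)$ with $|S|=k$. If $T_1,\ldots,T_a$ are $a=\lambda_G(S)$ pairwise edge-disjoint $S$-Steiner trees in $G$ and $T_1',\ldots,T_b'$ are $b=\lambda_{\overline{G}}(S)$ pairwise edge-disjoint $S$-Steiner trees in $\overline{G}$, then since $E(G)\cap E(\overline{G})=\varnothing$ the combined family furnishes $a+b$ pairwise edge-disjoint $S$-Steiner trees in $K_n$. Hence $\lambda_G(S)+\lambda_{\overline{G}}(S)\leq \lambda_{K_n}(S)$. By the vertex-transitivity of $K_n$, the value $\lambda_{K_n}(S)$ depends only on $|S|=k$, so $\lambda_{K_n}(S)=\lambda_k(K_n)=n-\lceil k/2\rceil$ by Lemma~\ref{lem3}. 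Since $\lambda_k(G)\leq \lambda_G(S)$ and $\lambda_k(\overline{G})\leq \lambda_{\overline{G}}(S)$ by definition, we conclude $\lambda_k(G)+\lambda_k(\overline{G})\leq n-\lceil k/2\rceil$. Part $(2)$ is then immediate from $(1)$ and the elementary inequality $ab\leq \bigl((a+b)/2\bigr)^2$:
\[
\lambda_k(G)\cdot \lambda_k(\overline{G}) \leq \left(\frac{\lambda_k(G)+\lambda_k(\overline{G})}{2}\right)^{\!2} \leq \left(\frac{n-\lceil k/2\rceil}{2}\right)^{\!2}.
\]

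For sharpness of $(1)$, taking $G=K_n$ yields $\lambda_k(G)=n-\lceil k/2\rceil$ by Lemma~\ref{lem3} and $\lambda_k(\overline{G})=0$ because $\overline{K_n}$ is edgeless, so the sum achieves the bound. For sharpness of $(2)$, I would hunt for a \emph{balanced} graph $G$ on which $\lambda_k(G)$ and $\lambda_k(\overline{G})$ both approach $(n-\lceil k/2\rceil)/2$; natural candidates are regular graphs such as the Harary graphs $H_{n,d}$ with $d$ chosen so that the complement is again (close to) a regular graph of prescribed edge-connectivity, invoking Lemma~\ref{lem1} to evaluate the resulting $STP$ numbers when $k=n$.

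The main obstacle is producing concrete extremal constructions for $(2)$, since one must simultaneously control $\lambda_k$ on $G$ and on $\overline{G}$, and the complement of a Harary graph is generally not itself a Harary graph. A case analysis depending on the parity of $n-\lceil k/2\rceil$ is likely required to realize sharpness for every admissible $k$.
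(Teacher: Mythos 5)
Your proofs of the two inequalities are correct and essentially the paper's argument: since $E(G)$ and $E(\overline{G})$ partition $E(K_n)$, edge-disjoint $S$-Steiner trees in $G$ and in $\overline{G}$ combine into edge-disjoint $S$-Steiner trees in $K_n$, giving $\lambda_k(G)+\lambda_k(\overline{G})\leq \lambda_k(K_n)=n-\lceil k/2\rceil$ by Lemma~\ref{lem3}, and $(2)$ follows from $(1)$ by $ab\leq\bigl((a+b)/2\bigr)^2$. Your sharpness example for $(1)$, namely $G=K_n$ with $\lambda_k(\overline{G})=0$, is also valid.

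The genuine gap is the sharpness of $(2)$: you only sketch a plan (``hunt for a balanced graph,'' Harary graphs $H_{n,d}$), and as you yourself note, the complement of a Harary graph is not a Harary graph, so Lemma~\ref{lem1}$(2)$ gives you no control over $\lambda_k(\overline{G})$; the construction is never produced, and the product bound requires both $\lambda_k(G)$ and $\lambda_k(\overline{G})$ to equal $(n-\lceil k/2\rceil)/2$ simultaneously, which no example in your proposal achieves. The paper closes this with an explicit balanced construction for $k=n$ and $n=4r+1$: start from $K_{2r,2r+1}$, which by Lemma~\ref{lem1}$(1)$ packs exactly $r$ edge-disjoint spanning trees, and delete the $2r$ edges $M$ not used by those trees, so $\lambda_n(G)=r$ for $G=K_{2r,2r+1}\setminus M$. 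Then $\overline{G}$ consists of two cliques $K_{2r}$ and $K_{2r+1}$ joined by the $2r$ edges of $M$; pairing $r$ Hamiltonian paths of $K_{2r}$ with $r$ Hamiltonian paths of $K_{2r+1}$ via $r$ distinct edges of $M$ yields $r$ edge-disjoint spanning trees of $\overline{G}$, and an edge count ($|E(\overline{G})|=4r^2+2r$, each spanning tree using $4r$ edges) shows $\lambda_n(\overline{G})=r$ exactly. Hence $\lambda_n(G)+\lambda_n(\overline{G})=2r=n-\lceil n/2\rceil$ and $\lambda_n(G)\cdot\lambda_n(\overline{G})=r^2=\bigl[\tfrac{n-\lceil n/2\rceil}{2}\bigr]^2$, attaining both bounds at once. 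Some such concrete construction (or an equivalent one) is needed before you can assert sharpness of the product bound.
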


\begin{proof}
$(1)$ Since $G\cup \overline{G}=K_n$,
$\lambda_k(G)+\lambda_k(\overline{G})\leq \lambda_k(K_n)$. Combining
this with Lemma \ref{lem3},
$\lambda_k(G)+\lambda_k(\overline{G})\leq
n-\lceil\frac{k}{2}\rceil$.

$(2)$ The conclusion holds by $(1)$.
\end{proof}

Let us focus on $(1)$ of Lemma \ref{lem6}. If one of $G$ and
$\overline{G}$ is disconnected, we can characterize the graphs
attaining the upper bound by Lemma \ref{lem4}.

\begin{proposition}\label{pro4}
For any graph $G$ of order $n$, if $G$ is disconnected, then
$\lambda_k(G)+ \lambda_k(\overline{G})=n-\lceil\frac{k}{2}\rceil$ if
and only if $\overline{G}=K_n$ for $k$ even;
$\overline{G}=K_n\setminus M$ for $k$ odd, where $M$ is an edge set
such that $0\leq |M|\leq \frac{k-1}{2}$.
\end{proposition}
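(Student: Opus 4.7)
The plan is to reduce the statement directly to Lemma \ref{lem4} applied to $\overline{G}$. Because $G$ is assumed to be disconnected, the paper's convention gives $\lambda_k(G)=0$, so the sum collapses to
\[
\lambda_k(G)+\lambda_k(\overline{G})=\lambda_k(\overline{G}).
\]
Hence the equality $\lambda_k(G)+\lambda_k(\overline{G})=n-\lceil k/2\rceil$ is equivalent to $\lambda_k(\overline{G})=n-\lceil k/2\rceil$, and the problem becomes one about a single graph attaining the maximum value of $\lambda_k$.

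Next I would verify that Lemma \ref{lem4} is applicable to $\overline{G}$, i.e., that $\overline{G}$ is connected. This is the standard fact that the complement of a disconnected graph is connected: if $V(G)=A\cup B$ is a partition with no edges of $G$ between $A$ and $B$, then every vertex of $A$ is adjacent in $\overline{G}$ to every vertex of $B$, and any two vertices of $A$ (respectively $B$) are connected in $\overline{G}$ through any vertex of $B$ (respectively $A$). Thus $\overline{G}$ is connected, and Lemma \ref{lem4} applies to it.

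Finally, I would plug into Lemma \ref{lem4}: $\lambda_k(\overline{G})=n-\lceil k/2\rceil$ if and only if $\overline{G}=K_n$ when $k$ is even, and $\overline{G}=K_n\setminus M$ with $0\leq|M|\leq\frac{k-1}{2}$ when $k$ is odd. Both directions follow at once: the ``if'' direction is immediate from the value given in Lemma \ref{lem4}, while the ``only if'' direction is exactly the characterization in that lemma. Together with the reduction in the first paragraph, this yields the stated equivalence.

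There is essentially no obstacle in this proof; the result is a direct corollary of Lemma \ref{lem4} together with the observation that $\overline{G}$ is connected whenever $G$ is disconnected. The only minor subtlety is to record that $\lambda_k(G)=0$ under the hypothesis (so the $\lambda_k(\overline{G})$ term alone must achieve the full upper bound), which is precisely the convention adopted in the introduction of the paper.
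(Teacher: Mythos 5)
Your proposal is correct and matches the paper's (implicit) justification: the paper states Proposition \ref{pro4} as a direct consequence of Lemma \ref{lem4}, exactly via the reduction $\lambda_k(G)=0$ for disconnected $G$ and the fact that $\overline{G}$ is then connected. Your write-up just makes these two routine steps explicit, so there is nothing to add beyond noting that the range $3\leq k\leq n$ required by Lemma \ref{lem4} is understood.
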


If both $G$ and $\overline{G}$ are all connected, we can obtain a
structural property of the graphs attaining the upper bound although
it seems too difficult to characterize them.

\begin{proposition}\label{pro5}
If $\lambda_k(G)+\lambda_k(\overline{G})=n-\lceil\frac{k}{2}\rceil$,
then $\Delta(G)-\delta(G)\leq \lceil\frac{k}{2}\rceil-1$.
\end{proposition}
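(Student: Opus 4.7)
The plan is to derive this inequality directly from the elementary bound $\lambda_k(H)\le \delta(H)$ noted in Observation \ref{obs1}(1), applied to both $G$ and $\overline{G}$. This observation is stated for connected graphs, but it extends trivially to the disconnected case because then $\lambda_k(H)=0$ by convention, while $\delta(H)\ge 0$.

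First I would record the identity $\delta(\overline{G})=n-1-\Delta(G)$, which follows at once from $d_{\overline{G}}(v)=n-1-d_G(v)$: the vertex minimizing the degree in $\overline{G}$ is precisely the vertex maximizing the degree in $G$. Applying Observation \ref{obs1}(1) to both $G$ and $\overline{G}$ then yields
\[
\lambda_k(G)+\lambda_k(\overline{G})\;\le\;\delta(G)+\delta(\overline{G})\;=\;\delta(G)+n-1-\Delta(G).
\]

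The second step is to feed in the hypothesis $\lambda_k(G)+\lambda_k(\overline{G})=n-\lceil k/2\rceil$, which gives $n-\lceil k/2\rceil\le \delta(G)+n-1-\Delta(G)$, and rearranging produces $\Delta(G)-\delta(G)\le \lceil k/2\rceil-1$, as desired.

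There is essentially no obstacle here; the argument is a one-line consequence of the degree bound together with the standard relationship between complementary degrees. The only point needing a brief comment is the disconnected case: if $G$ or $\overline{G}$ happens to be disconnected, the corresponding term in the sum is $0$, so the inequality $\lambda_k\le \delta$ remains valid and the argument goes through unchanged.
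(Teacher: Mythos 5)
Your argument is correct and is essentially the paper's own proof: both rest on $\lambda_k(G)\le\delta(G)$, $\lambda_k(\overline{G})\le\delta(\overline{G})=n-1-\Delta(G)$, and a rearrangement of the resulting bound on the sum; the paper merely phrases it as a proof by contradiction while you argue directly. Your remark on the disconnected case is a harmless (and valid) addition.
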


\begin{proof}
Assume that $\Delta(G)-\delta(G)\geq \lceil\frac{k}{2}\rceil$. Since
$\lambda_k(\overline{G})\leq \delta(\overline{G})=n-1-\Delta(G)$,
$\lambda_k(G)+\lambda_k(\overline{G})\leq
\delta(G)+n-1-\Delta(G)\leq n-1-\lceil\frac{k}{2}\rceil$, a
contradiction.
\end{proof}

One can see that the graphs with
$\lambda_k(G)+\lambda_k(\overline{G})=n-\lceil\frac{k}{2}\rceil$
must have a uniform degree distribution. Actually, we can construct
a graph class to show that the two upper bounds of Lemma \ref{lem6}
are tight for $k=n$.

\textbf{Example 2.}~~Let $n,r$ be two positive integers such that
$n=4r+1$. From $(1)$ of Lemma \ref{lem1}, we know that the $STP$
number of the complete bipartite graph $K_{2r,2r+1}$ is
$\lfloor\frac{2r(2r+1)}{2r+(2r+1)-1}\rfloor=r$, that is,
$\lambda_{n}(K_{2r,2r+1})=r$. Let $\mathcal {E}$ be the set of the
edges of these $r$ spanning trees in $K_{2r,2r+1}$. Then there exist
$2r(2r+1)-4r^2=2r$ remaining edges in $K_{2r,2r+1}$ except the edges
in $\mathcal {E}$. Let $M$ be the set of these $2r$ edges. Set
$G=K_{2r,2r+1}\setminus M$. Then $\lambda_{n}(G)=r$, $M\subseteq
E(\overline{G})$ and $\overline{G}$ is a graph obtained from two
cliques $K_{2r}$ and $K_{2r+1}$ by adding $2r$ edges in $M$ between
them, that is, one endpoint of each edge belongs to $K_{2r}$ and the
other endpoint belongs to $K_{2r+1}$. Note that
$E(\overline{G})=E(K_{2r})\cup M\cup E(K_{2r+1})$. Now we show that
$\lambda_{n}(\overline{G})\geq r$. As we know, $K_{2r}$ contains $r$
Hamiltonian paths, say $P_{1},P_{2},\cdots,P_{r}$, and so does
$K_{2r+1}$, say $P_{1}',P_{2}',\cdots,P_{r}'$. Pick up $r$ edges
from $M$, say $e_1,e_2,\cdots,e_r$, let $T_i=P_{i}\cup P_{i}'\cup
e_i(1\leq i\leq r)$. Then $T_{1},T_{2},\cdots,T_{r}$ are $r$
spanning trees in $\overline{G}$, namely,
$\lambda_{n}(\overline{G})\geq r$. Since
$|E(\overline{G})|={{2r}\choose{2}}+{{2r+1}\choose{2}}+2r=4r^2+2r$
and each spanning tree uses $4r$ edges, these edges can form at most
$\lfloor\frac{4r^2+2r}{4r}\rfloor=r$ spanning trees, that is,
$\lambda_{n}(\overline{G})\leq r$. So $\lambda_{n}(\overline{G})=r$.

Clearly, $\lambda_{n}(G)+\lambda_{n}(\overline{G})=2r=\frac{n-1}{2}
=n-\lceil\frac{n}{2}\rceil$ and $\lambda_{n}(\overline{G})\cdot
\lambda_{n}(\overline{G})=r^2=\big[\frac{n-\lceil n/2
\rceil}{2}\big]^2$, which implies that the upper bound of Lemma
\ref{lem6} is sharp.

Combining Lemmas \ref{lem2} and \ref{lem6}, we give our main result.

\begin{theorem}\label{th3}
Let $G\in \mathcal {G}(n)$. Then

$(1)$ $1\leq \lambda_k(G)+\lambda_k(\overline{G})\leq n-\lceil k/2
\rceil$;

$(2)$ $0\leq \lambda_k(G)\cdot \lambda_k(\overline{G})\leq
\big[\frac{n-\lceil k/2 \rceil}{2}\big]^2$.

Moreover, the upper and lower bounds are sharp.
\end{theorem}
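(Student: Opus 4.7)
The plan is to assemble Theorem \ref{th3} as a direct consequence of the two auxiliary lemmas already in place, and to point to the constructions already described for sharpness, so essentially no new argument is needed. I would first note that the lower bound statements in Theorem \ref{th3} are exactly the content of Lemma \ref{lem2}: part (1) follows because $G$ and $\overline{G}$ cannot both be disconnected, and part (2) is immediate from $\lambda_k \geq 0$. The upper bound statements are exactly Lemma \ref{lem6}, where part (1) comes from $G \cup \overline{G} = K_n$ combined with monotonicity $\lambda_k(G) + \lambda_k(\overline{G}) \leq \lambda_k(K_n)$ and Lemma \ref{lem3}, and part (2) follows from the AM-GM-type estimate applied to (1).

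For the sharpness of the lower bounds, I would recall that Lemma \ref{lem2} already observes tightness: for the sum, any graph $G$ with $\overline{G}$ disconnected and $\lambda_k(G) = 1$ works (Proposition \ref{pro3} in fact provides a full characterization of such $G$); for the product, any $G$ with $G$ or $\overline{G}$ disconnected gives value $0$, as already recorded in Observation \ref{obs2}. Thus one sentence citing these two results suffices.

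For the sharpness of the upper bounds, I would invoke Example 2, which exhibits, for $n = 4r+1$ and $k = n$, a graph $G$ obtained from $K_{2r,2r+1}$ by removing a specific set of $2r$ edges such that $\lambda_n(G) = \lambda_n(\overline{G}) = r$. Then
\[
\lambda_n(G) + \lambda_n(\overline{G}) = 2r = n - \lceil n/2 \rceil,
\qquad
\lambda_n(G)\cdot \lambda_n(\overline{G}) = r^2 = \left[\frac{n-\lceil n/2\rceil}{2}\right]^2,
\]
showing both upper bounds are attained simultaneously.

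The proof is therefore really just a bookkeeping step; the only small subtlety to flag is that the sharpness of the two upper bounds is witnessed by the same family of graphs and the same value of $k$, namely $k = n$, so no separate construction is required. I anticipate no genuine obstacle, since all the heavy lifting (the value $\lambda_k(K_n) = n - \lceil k/2 \rceil$, the Nash-Williams--Tutte based estimate behind $\lambda_n(K_{2r,2r+1}) = r$, and the explicit spanning-tree construction in $\overline{G}$) has been done in Lemmas \ref{lem1}, \ref{lem3}, and Example 2.
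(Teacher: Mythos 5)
Your proposal is correct and follows essentially the same route as the paper: the theorem is obtained by combining Lemma \ref{lem2} (lower bounds) with Lemma \ref{lem6} (upper bounds), with sharpness of the lower bounds witnessed by the graphs in Observation \ref{obs2} and Proposition \ref{pro3}, and sharpness of both upper bounds witnessed simultaneously by the construction in Example 2 with $k=n=4r+1$. No gaps to report.
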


\section{Nordhaus-Gaddum-type results in $\mathcal {G}(n,m)$}

Achthan et. al. \cite{Achuthan} restricted their attention to the
subclass of $\mathcal {G}(n,m)$ consisting of graphs with exactly
$m$ edges. They investigated the edge-connectivity, diameter and
chromatic number parameters. For edge-connectivity $\lambda(G)$,
they showed that $\lambda(G)+\lambda(\overline{G})\geq
max\{1,n-1-m\}$. In this section, we consider a similar problem on
the generalized edge-connectivity.

\begin{lemma}\label{lem7}
If $M$ is an edge set of the complete graph $K_n$ such that $0\leq
m\leq \lfloor\frac{n}{3}\rfloor$ where $|M|=m$, then $G=K_n\setminus
M$ contains $\ell$ edge-disjoint spanning trees, where $\ell=min \{
n-2m-1, \lfloor \frac{n}{2}-\frac{2m}{n-1} \rfloor \}$.
\end{lemma}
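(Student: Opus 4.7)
My plan is to apply the Nash-Williams--Tutte theorem (Theorem~\ref{th1}) directly. To conclude that $G$ contains $\ell$ edge-disjoint spanning trees it suffices to verify $\|G/\mathscr{P}\| \geq \ell(p-1)$ for every partition $\mathscr{P}$ of $V(G)$ into $p$ parts. The first reduction is immediate: since $G=K_n\setminus M$, at most $m$ of the removed edges can cross any given partition, so $\|G/\mathscr{P}\| \geq \|K_n/\mathscr{P}\| - m$, and the problem is reduced to a clean calculation inside $K_n$.

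If the parts of $\mathscr{P}$ have sizes $n_1,\ldots,n_p$ with $\sum n_i = n$, then $\|K_n/\mathscr{P}\| = \binom{n}{2} - \sum_{i=1}^{p}\binom{n_i}{2}$. By the strict convexity of $x \mapsto \binom{x}{2}$, the sum $\sum_i \binom{n_i}{2}$ is maximised (for fixed $p$) by the most unbalanced shape $(n-p+1,1,\ldots,1)$, so $\|K_n/\mathscr{P}\| \geq \binom{n}{2} - \binom{n-p+1}{2} = \frac{(p-1)(2n-p)}{2}$. Combining the two estimates, the Nash-Williams--Tutte condition becomes $\ell \leq g(p)$ for every $p \in \{2,\ldots,n\}$, where $g(p) := \frac{2n-p}{2} - \frac{m}{p-1}$.

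To dispose of all values of $p$ at once, I would compute $g''(p) = -2m/(p-1)^3 < 0$, so $g$ is strictly concave on $(1,\infty)$ and hence $g(p) \geq \min\{g(2),g(n)\}$ for every integer $p$ in $[2,n]$. Evaluating, $g(2) = n-1-m$ and $g(n) = \tfrac{n}{2} - \tfrac{m}{n-1}$, and the two terms appearing in the definition of $\ell$ are precisely tailored to these endpoints: $n - 2m - 1 \leq g(2)$ holds trivially because $m \geq 0$, while $\lfloor \tfrac{n}{2} - \tfrac{2m}{n-1} \rfloor \leq g(n)$ holds because $\tfrac{2m}{n-1} \geq \tfrac{m}{n-1}$. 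Therefore $\ell \leq \min\{g(2),g(n)\} \leq g(p)$ for every $p$, and Theorem~\ref{th1} delivers the required $\ell$ edge-disjoint spanning trees.

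The step that I expect to require the most care is pinning down the extremal partition in the second paragraph (the convexity argument); everything else is elementary inequality manipulation. The hypothesis $m \leq \lfloor n/3 \rfloor$ plays no role in the Nash-Williams--Tutte verification itself, and seems to be imposed only to guarantee that both $n-2m-1$ and $\lfloor \tfrac{n}{2} - \tfrac{2m}{n-1}\rfloor$ are nonnegative so that the resulting lower bound is nontrivial.
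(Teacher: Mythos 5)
Your proof is correct and follows essentially the same route as the paper: apply Nash--Williams--Tutte, lower-bound the crossing edges by $\binom{n}{2}-\sum_i\binom{n_i}{2}-m$, pass to the extremal partition $(n-p+1,1,\dots,1)$ by convexity, and reduce to checking only the endpoints $p=2$ and $p=n$ via the concavity of $g(p)$ (the paper equivalently uses convexity of $\frac{x}{2}+\frac{2m}{x}$). If anything your algebra is slightly sharper --- the paper's simplification replaces $\frac{m}{p-1}$ by $\frac{2m}{p-1}$, a conservative slip that still yields the stated bound --- and your observation that $m\leq\lfloor n/3\rfloor$ only serves to keep $\ell$ nontrivial is consistent with how the hypothesis is used there.
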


\begin{proof}
Let $\mathscr{P}=\bigcup_{i=1}^pV_i$ be a partition of $V(G)$ with
$|V_i|=n_i \ (1\leq i\leq p)$, and $\mathcal {E}_p$ be the set of
edges between distinct parts of $\mathscr{P}$ in $G$. It suffices to
show that $|\mathcal {E}_p|\geq \ell(|\mathscr{P}|-1)$ so that we
can use Nash-Williams-Tutte Theorem.

The case $p=1$ is trivial, thus we assume $2\leq p\leq n$. Then
$|\mathcal {E}_p|\geq
{{n}\choose{2}}-\sum_{i=1}^p{{n_i}\choose{2}}-|M|\geq
{{n}\choose{2}}-\sum_{i=1}^p{{n_i}\choose{2}}-m$. We will show that
${{n}\choose{2}}-\sum_{i=1}^p{{n_i}\choose{2}}-m\geq \ell(p-1)$,
that is, $\frac{n(n-1)}{2}-m-\ell(p-1)\geq
\sum_{i=1}^p{{n_i}\choose{2}}$. We only need to prove that
$\frac{n(n-1)}{2}-m-\ell(p-1)\geq
max\{\sum_{i=1}^p{{n_i}\choose{2}}\}$. Since
$f(n_1,n_2,\cdots,n_p)=\sum_{i=1}^p{{n_i}\choose{2}}$ achieves its
maximum value when $n_1=n_2=\cdots=n_{p-1}=1$ and $n_p=n-p+1$, we
need the inequality $\frac{n(n-1)}{2}-m-\ell(p-1)\geq
{{1}\choose{2}}(p-1)+{{n-p+1}\choose{2}}$, that is,
$\frac{n(n-1)}{2}-m-\frac{(n-p+1)(n-p)}{2}\geq \ell(p-1)$. Actually,
$\ell\leq \frac{n(n-1)-(n-p+1)(n-p)-2m}{2(p-1)}$ is our required
inequality, namely, $\ell\leq
n-\frac{1}{2}-(\frac{p-1}{2}+\frac{2m}{p-1})$. Since
$f(x)=\frac{x}{2}+\frac{2m}{x}$ achieves its maximum value $max
\{2m+\frac{1}{2}, \frac{n-1}{2}+\frac{2m}{n-1}\}$ when $1\leq x\leq
n-1$, we need $\ell\leq min \{n-2m-1, \frac{n}{2}-\frac{2m}{n-1}\}$.
Since this inequality holds for $0\leq m\leq
\lfloor\frac{n}{3}\rfloor$, we have $|\mathcal {E}_p|\geq
{{n}\choose{2}}-\sum_{i=1}^p{{n_i}\choose{2}}-|M|\geq \ell(p-1)$.
From Theorem \ref{th1}, we know that $G$ has $\ell$ edge-disjoint
spanning trees.
\end{proof}

\begin{lemma}\label{lem8}
Let $G\in \mathcal {G}(n,m)$. For $n\geq 6$, we have

$(1)$ $\lambda_k(G)+\lambda_k(\overline{G})\geq L(n,m)$, where

$$L(n,m)=\left\{
\begin{array}{cc}
max\{1,\lfloor\frac{1}{2}(n-2-m)\rfloor\} &~~~~~~~~if~\lfloor\frac{n}{3}\rfloor+1\leq m\leq {{n}\choose{2}},\\
min \{n-2m-1, \lfloor \frac{n}{2}-\frac{2m}{n-1}\rfloor\}&if~0\leq
m\leq \lfloor\frac{n}{3}\rfloor.
\end{array}
\right.$$

$(2)$ $\lambda_k(G)\cdot \lambda_k(\overline{G})\geq 0$.
\end{lemma}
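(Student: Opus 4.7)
The plan is to separate the two claims. Part $(2)$ is immediate, since $\lambda_k(H) \geq 0$ for every graph $H$. For part $(1)$, I would argue differently in the two ranges of $m$ and match each to one of the two expressions defining $L(n,m)$.

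In the small-$m$ range $0 \leq m \leq \lfloor n/3 \rfloor$, I would exploit the fact that $\overline{G}$ is close to complete. Setting $M := E(G)$, we have $\overline{G} = K_n \setminus M$ with $|M| = m \leq \lfloor n/3 \rfloor$, so Lemma \ref{lem7} applies directly to $\overline{G}$ and produces $\ell := \min\{n-2m-1, \lfloor n/2 - 2m/(n-1)\rfloor\}$ edge-disjoint spanning trees in $\overline{G}$. Each such spanning tree is automatically a Steiner tree connecting any $S \subseteq V(\overline{G})$, so $\lambda_n(\overline{G}) \geq \ell$; Proposition \ref{pro2} then promotes this to $\lambda_k(\overline{G}) \geq \ell$ for every $k$. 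Combined with $\lambda_k(G) \geq 0$, this gives the desired lower bound for the first range.

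In the larger-$m$ range $\lfloor n/3 \rfloor + 1 \leq m \leq \binom{n}{2}$, the target $L(n,m) = \max\{1, \lfloor (n-2-m)/2 \rfloor\}$ is the maximum of two terms, and I plan to derive each independently. The bound $\geq 1$ is the classical observation that $G$ and $\overline{G}$ cannot both be disconnected, so at least one of $\lambda_k(G), \lambda_k(\overline{G})$ is already $\geq 1$. For the $\lfloor (n-2-m)/2 \rfloor$ term, I would invoke the Achuthan et al.\ inequality $\lambda(G) + \lambda(\overline{G}) \geq n-1-m$ and pass from $\lambda$ to $\lambda_k$ via Proposition \ref{pro1}, which (trivially extending to $k=2$ using $\lambda_2 = \lambda$) yields $\lambda_k(H) \geq \lfloor \lambda(H)/2 \rfloor$ for every graph $H$. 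Applied to both sides and summed, this gives $\lambda_k(G) + \lambda_k(\overline{G}) \geq \lfloor \lambda(G)/2 \rfloor + \lfloor \lambda(\overline{G})/2 \rfloor$, and the elementary identity $\lfloor a/2 \rfloor + \lfloor b/2 \rfloor \geq \lfloor (a+b-1)/2 \rfloor$ together with $\lambda(G) + \lambda(\overline{G}) \geq n-1-m$ delivers the required $\lfloor (n-2-m)/2 \rfloor$.

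The main subtlety will be the parity accounting in the second range: the worst case is when $\lambda(G)$ and $\lambda(\overline{G})$ are both odd, where the two individual floors together lose exactly one unit relative to half the sum, and this is precisely what the $-1$ inside the final floor absorbs. Everything else is a routine assembly of Lemma \ref{lem7}, Propositions \ref{pro1} and \ref{pro2}, and the cited Achuthan et al.\ bound; no further graph-structural argument is required.
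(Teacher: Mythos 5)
Your proposal is correct and follows essentially the same route as the paper: nonnegativity for part $(2)$; for small $m$, Lemma \ref{lem7} applied to $\overline{G}=K_n\setminus E(G)$ plus the monotonicity $\lambda_k\geq\lambda_n$; and for large $m$, the bound $\geq 1$ from the fact that $G$ and $\overline{G}$ cannot both be disconnected together with Proposition \ref{pro1} and the Achuthan et al.\ inequality, with the same one-unit parity loss absorbed by the floor. The only (harmless) cosmetic difference is that you use $\lambda_k(G)\geq 0$ in the small-$m$ case where the paper notes $\lambda_k(G)=0$ outright.
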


\begin{proof}
$(1)$ Since at least one of $G$ and $\overline{G}$ must be
connected, we have $\lambda_k(G)+\lambda_k(\overline{G})\geq 1$. For
$m<n-1$, $\lambda_k(G)+\lambda_k(\overline{G})\geq
\lfloor\frac{1}{2}\lambda(G)\rfloor+\lfloor\frac{1}{2}\lambda(\overline{G})\rfloor
\geq
\lfloor\frac{1}{2}(\lambda(G)+\lambda(\overline{G})-1)\rfloor\geq
\lfloor\frac{1}{2}(max\{1,n-1-m\}-1)\rfloor\geq
\lfloor\frac{1}{2}(n-2-m)\rfloor$  by Proposition \ref{pro1}. So
$\lambda_k(G)+\lambda_k(\overline{G})\geq
max\{1,\lfloor\frac{1}{2}(n-2-m)\rfloor\}$. In particular, for
$0\leq m\leq \lfloor\frac{n}{3}\rfloor$, we can give a better lower
bound of $\lambda_k(G)+\lambda_k(\overline{G})$  by Lemma
\ref{lem7}, that is,
$\lambda_k(G)+\lambda_k(\overline{G})=\lambda_k(\overline{G})\geq
\lambda_n(\overline{G})\geq min \{n-2m-1, \lfloor
\frac{n}{2}-\frac{2m}{n-1}\rfloor\}$.

To show the sharpness of the above lower bound for
$\lfloor\frac{n}{3}\rfloor+1\leq m\leq {{n}\choose{2}}$, we consider
the graph $G=K_{1,n-2}\cup K_1$. Then $m=n-2$ and $\overline{G}$ is
a graph obtained from a complete graph $K_{n-1}$ by attaching a
pendant edge. Clearly, $\lambda_k(G)=0$ and
$\lambda_k(\overline{G})=1$. So
$\lambda_k(G)+\lambda_k(\overline{G})=1=max\{1,\lfloor\frac{1}{2}(n-2-m)\rfloor\}$.
To show the sharpness of the above lower bound for $0\leq m\leq
\lfloor\frac{n}{3}\rfloor$, we consider the graph $G=nK_1$. Thus
$m=0$ and $\overline{G}=K_n$. Since
$\lambda_n(G)+\lambda_n(\overline{G})=0+\lfloor\frac{n}{2}\rfloor=min
\{n-2\cdot 0-1, \lfloor \frac{n}{2}-\frac{2\cdot 0}{n-1}\rfloor\}$,
that is, the lower bound is sharp for $k=n$.

$(2)$ The inequality follows from Theorem \ref{th3}.
\end{proof}

It was pointed out by Harary \cite{Harary} that given the number of
vertices and edges of a graph, the largest connectivity possible can
also be read out of the inequality $\kappa(G)\leq \lambda(G)\leq
\delta(G)$.

\begin{theorem}\cite{Harary}\label{th2}
For each $n,m$ with $0\leq n-1\leq m\leq {{n}\choose{2}}$,

$$\kappa(G)\leq \lambda(G)\leq \Big\lfloor \frac{2m}{n}\Big\rfloor,$$
where the maximum are taken over all graphs $G\in \mathcal
{G}(n,m)$.
\end{theorem}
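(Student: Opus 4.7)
The plan is to derive the upper bound from the classical Whitney chain $\kappa(G)\le\lambda(G)\le\delta(G)$ together with the handshake lemma, and then realize the bound by a Harary-graph construction.

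First I would recall Whitney's inequality: for every graph $G$, removing all edges incident to a minimum-degree vertex disconnects that vertex (or leaves a trivial graph), so $\lambda(G)\le\delta(G)$, while any vertex cut gives rise to an edge cut of no larger size by choosing one incident edge from each cut vertex, yielding $\kappa(G)\le\lambda(G)$. Next, since $\sum_{v\in V(G)} d_G(v)=2m$, the minimum degree satisfies $\delta(G)\le 2m/n$, and integrality forces $\delta(G)\le\lfloor 2m/n\rfloor$. Chaining these three inequalities for an arbitrary $G\in\mathcal{G}(n,m)$ gives $\kappa(G)\le\lambda(G)\le\lfloor 2m/n\rfloor$, so the maximum of either parameter over $\mathcal{G}(n,m)$ is at most $\lfloor 2m/n\rfloor$.

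To show that this maximum is actually attained, I would set $d=\lfloor 2m/n\rfloor$ and invoke the Harary graph $H_{n,d}$, for which it is known that $\kappa(H_{n,d})=\lambda(H_{n,d})=\delta(H_{n,d})=d$, as recorded in the paragraph preceding Lemma~\ref{lem1}. The number of edges of $H_{n,d}$ is $\lceil nd/2\rceil\le m$ by the definition of $d$, so we need to adjust the edge count. I would simply add the extra $m-\lceil nd/2\rceil$ edges arbitrarily between non-adjacent pairs of vertices (this is possible since $d\le n-1$, so $H_{n,d}$ is not complete unless already $m=\binom{n}{2}$); the resulting graph $G^*\in\mathcal{G}(n,m)$ satisfies $\kappa(G^*)\ge\kappa(H_{n,d})=d$ and $\lambda(G^*)\ge\lambda(H_{n,d})=d$, since adding edges can only increase (vertex- and edge-)connectivity. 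Together with the upper bound of the previous paragraph, equality $\kappa(G^*)=\lambda(G^*)=\lfloor 2m/n\rfloor$ follows.

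The only subtle point is confirming that enough non-edges remain to perform the augmentation, which reduces to checking $\lceil nd/2\rceil\le m$ for $d=\lfloor 2m/n\rfloor$; this is immediate because $nd\le 2m$ implies $\lceil nd/2\rceil\le m$ whenever $nd$ is even, and the ceiling adds at most $1/2$ which is absorbed by integrality of $m$. So the main obstacle is really just bookkeeping in the Harary augmentation step, and no deeper argument than Whitney's inequality plus the handshake lemma is needed for the bound itself.
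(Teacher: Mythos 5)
The paper contains no proof of Theorem~\ref{th2}: it is quoted directly from Harary \cite{Harary}, so there is no internal argument to compare against. Your proposal is, in substance, Harary's original proof and it is correct: the upper bound comes from chaining $\kappa(G)\le\lambda(G)\le\delta(G)\le\lfloor 2m/n\rfloor$ (Whitney's inequality plus the handshake lemma), and attainment comes from taking $d=\lfloor 2m/n\rfloor$, starting with the Harary graph $H_{n,d}$, which has $\kappa=\lambda=\delta=d$ and $\lceil nd/2\rceil$ edges, and adding the remaining $m-\lceil nd/2\rceil$ edges arbitrarily; adding edges cannot decrease either connectivity parameter, and your bookkeeping that $\lceil nd/2\rceil\le m\le\binom{n}{2}$ (so that enough non-adjacent pairs exist, with the degenerate case $d=n-1$ forcing $m=\binom{n}{2}$) is sound. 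One local slip worth fixing: your justification of $\kappa(G)\le\lambda(G)$ --- ``any vertex cut gives rise to an edge cut of no larger size by choosing one incident edge from each cut vertex'' --- argues in the wrong direction; if such a statement held it would yield $\lambda\le\kappa$, and it is not how Whitney's inequality is established (the standard argument starts from a minimum edge cut and extracts a vertex cut of at most the same size, with a special case when one side of the cut is complete to the other). Since Whitney's inequality is classical and you invoke it by name, this does not break the proof, but the parenthetical justification should be corrected or simply replaced by the citation.
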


Now we will study a similar problem for the generalized
edge-connectivity, which will be used in $(2)$ of Lemma \ref{lem9}.

\begin{corollary}\label{cor2}
For any graph $G\in \mathcal {G}(n,m)$ and $3\leq k\leq n$,
$\lambda_k(G)=0$ for $m<n-1$; $\lambda_k(G)\leq
\lfloor\frac{2m}{n}\rfloor$ for $m\geq n-1$.
\end{corollary}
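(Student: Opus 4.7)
The statement is Corollary \ref{cor2}, asserting that $\lambda_k(G)=0$ when $m<n-1$ and $\lambda_k(G)\le \lfloor 2m/n\rfloor$ when $m\ge n-1$. Both parts should be short, since Corollary \ref{cor2} is meant as a specialization of Theorem \ref{th2} to the generalized edge-connectivity via the monotonicity $\lambda_k(G)\le \lambda(G)\le \delta(G)$ from Observation \ref{obs1}(1).

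The plan is as follows. First I would dispatch the case $m<n-1$ by the elementary fact that a connected graph on $n$ vertices has at least $n-1$ edges; so $G$ must be disconnected, and by the convention set in the introduction (``Set $\lambda_k(G)=0$ when $G$ is disconnected''), $\lambda_k(G)=0$, which is exactly what the statement claims.

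For the second case $m\ge n-1$, I would chain two inequalities. By Observation \ref{obs1}(1), $\lambda_k(G)\le \lambda(G)\le \delta(G)$. Then the standard averaging argument gives $\delta(G)\le 2m/n$: the degree sum equals $2m$, so the minimum degree is at most the average degree $2m/n$, and since $\delta(G)$ is an integer, $\delta(G)\le \lfloor 2m/n\rfloor$. Combining these yields $\lambda_k(G)\le \lfloor 2m/n\rfloor$, as required. (This is exactly the content of Harary's inequality $\lambda(G)\le \lfloor 2m/n\rfloor$ cited in Theorem \ref{th2}, applied through the bound $\lambda_k\le \lambda$.)

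There is really no hard step; this corollary is a one-line consequence of Observation \ref{obs1}(1) together with Theorem \ref{th2}, with the only extra remark being the trivial disconnection case when $m<n-1$. The only subtlety worth flagging is that Theorem \ref{th2} is phrased as an upper bound on the maximum of $\lambda(G)$ over $\mathcal{G}(n,m)$, but since the proof of that bound is just $\lambda(G)\le \delta(G)\le \lfloor 2m/n\rfloor$, it applies to every individual $G\in \mathcal{G}(n,m)$, which is what is needed here.
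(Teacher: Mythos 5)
Your proof is correct and follows essentially the same route as the paper: the $m<n-1$ case is handled by disconnectedness and the convention $\lambda_k(G)=0$, and the $m\geq n-1$ case by chaining $\lambda_k(G)\leq \lambda(G)\leq \lfloor 2m/n\rfloor$ via Observation \ref{obs1}(1) and Theorem \ref{th2}. Your extra remark that Harary's bound applies to each individual graph (through $\lambda(G)\leq\delta(G)\leq\lfloor 2m/n\rfloor$) is a fair clarification but does not change the argument.
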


\begin{proof}
Let $G\in \mathcal {G}(n,m)$. When $0\leq m<n-1$, $G$ must be
disconnected and hence $\lambda_k(G)=0$. If $m\geq n-1$,
$\lambda_k(G)\leq \lambda(G)\leq \lfloor\frac{2m}{n}\rfloor$ by
$(1)$ of Observation \ref{obs1} and Theorem \ref{th2}.
\end{proof}

Although the above bound of $\lambda_k(G)$ is the same as
$\lambda(G)$, the graphs attaining the upper bound seems to be very
rare. Actually, we can obtain some structural properties of these
graphs.

\begin{proposition}\label{pro6}
For any $G\in \mathcal {G}(n,m)$ and $3\leq k\leq n$, if
$\lambda_k(G)=\lfloor\frac{2m}{n}\rfloor$ for $m\geq n-1$, then

$(1)$ $\frac{2m}{n}$ is not an integer;

$(2)$ $\delta(G)=\lfloor\frac{2m}{n}\rfloor$;

$(3)$ for $u,v\in V(G)$ such that
$d_G(u)=d_G(v)=\lfloor\frac{2m}{n}\rfloor$, $uv\notin E(G)$.
\end{proposition}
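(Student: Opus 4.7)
The plan is to combine Observation~\ref{obs1} with the elementary degree-sum bound $\delta(G) \le 2m/n$ and the upper bound $\lambda_k(G) \le \lfloor 2m/n \rfloor$ from Corollary~\ref{cor2}. It is cleanest to prove the three parts in the order $(2) \Rightarrow (3) \Rightarrow (1)$, using each as leverage for the next.

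For $(2)$, I would note that Observation~\ref{obs1}$(1)$ gives $\lambda_k(G) \le \delta(G)$, while the handshake lemma gives $\delta(G) \le 2m/n$, so $\delta(G) \le \lfloor 2m/n \rfloor$. The hypothesis $\lambda_k(G) = \lfloor 2m/n \rfloor$ then pinches $\delta(G)$ between $\lfloor 2m/n \rfloor$ and itself, forcing equality.

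For $(3)$, suppose for contradiction that $u,v \in V(G)$ with $d_G(u) = d_G(v) = \lfloor 2m/n \rfloor = \delta(G)$ satisfy $uv \in E(G)$. Then Observation~\ref{obs1}$(3)$ yields $\lambda_k(G) \le \delta(G)-1 < \lfloor 2m/n \rfloor$, contradicting the hypothesis.

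For $(1)$, suppose $2m/n$ is an integer. By $(2)$, $\delta(G) = 2m/n$, so the degree sum satisfies $2m = \sum_{v \in V(G)} d_G(v) \ge n\, \delta(G) = 2m$; equality throughout forces every vertex to have degree exactly $\delta(G)$, i.e.\ $G$ is $\delta(G)$-regular. Since $m \ge n-1 \ge 1$, the graph $G$ contains at least one edge $uv$, whose endpoints both have degree $\delta(G) = \lfloor 2m/n \rfloor$; this contradicts $(3)$. No genuine obstacle arises, since each part reduces to one invocation of the appropriate clause of Observation~\ref{obs1} once the correct proof order is fixed; the mild subtlety is noticing that proving $(2)$ and $(3)$ first turns $(1)$ into a one-line consequence of regularity.
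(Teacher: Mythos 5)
Your proof is correct and uses essentially the same ingredients as the paper: Observation \ref{obs1}(1), Observation \ref{obs1}(3), and the average-degree (handshake) bound $\delta(G)\leq 2m/n$. The only difference is organizational — you establish $(2)$ and $(3)$ first and derive $(1)$ from regularity, whereas the paper proves $(1)$ directly by splitting into the regular and non-regular cases (the latter giving a vertex of degree below $2m/n$) and then deduces $(2)$ and $(3)$; the underlying contradiction in the integer case is identical.
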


\begin{proof}
One can check that the conclusion holds for the case $m=n-1$. Assume
$m\geq n$. We claim that $\frac{2m}{n}$ is not an integer.
Otherwise, let $r=\frac{2m}{n}$ be an integer. We will show that
$\lambda_k(G)\leq r-1=\frac{2m}{n}-1$ and get a contradiction. If
$G$ has at least one vertex $v_i$ such that $d(v_i)>r$, then, since
the average degree of $G$ is exactly $r$, there must be a vertex
$v_j$ whose degree $d(v_j)<r$. From $(1)$ of Observation \ref{obs1},
we have $\lambda_k(G)\leq \delta(G)\leq d(v_j)<r$, that is,
$\lambda_k(G)\leq r-1$. If, on the other hand, $G$ is a regular
graph, then by $(3)$ of Observation \ref{obs1}, $\lambda_k(G)\leq
\delta(G)-1=r-1$. So $(1)$ holds.

For a graph G such that $\frac{2m}{n}$ is not an integer,
$\lfloor\frac{2m}{n}\rfloor=\lambda_k(G)\leq \delta(G)\leq
\lfloor\frac{2m}{n}\rfloor$, that is,
$\delta(G)=\lfloor\frac{2m}{n}\rfloor$. So $(2)$ holds.

For $u,v\in V(G)$ such that
$d_G(u)=d_G(v)=\lfloor\frac{2m}{n}\rfloor$, we claim that $uv\notin
E(G)$. Otherwise, $uv\in E(G)$. Since
$d_G(u)=d_G(v)=\delta(G)=\lfloor\frac{2m}{n}\rfloor$,
$\lambda_k(G)\leq \delta(G)-1=\lfloor\frac{2m}{n}\rfloor-1$ by $(3)$
of Observation \ref{obs1}, a contradiction. So $(3)$ holds.
\end{proof}

\begin{corollary}\label{cor3}
For any graph $G$ of order $n$ and size $m$, if $\frac{2m}{n}$ is an
integer, then $\lambda_k(G)\leq \frac{2m}{n}-1$.
\end{corollary}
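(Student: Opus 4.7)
The plan is to derive this corollary as an immediate consequence of Corollary \ref{cor2} combined with part (1) of Proposition \ref{pro6}. First I would invoke Corollary \ref{cor2} to record the preliminary bound $\lambda_k(G) \leq \lfloor 2m/n \rfloor$; under the integrality hypothesis $\lfloor 2m/n \rfloor = 2m/n$, so this reads $\lambda_k(G) \leq 2m/n$. The task then reduces to ruling out equality.

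To upgrade the bound by one, I would argue by contradiction. Suppose $\lambda_k(G) = 2m/n = \lfloor 2m/n \rfloor$. Then the hypothesis $\lambda_k(G) = \lfloor 2m/n \rfloor$ of Proposition \ref{pro6} is satisfied, so part (1) of that proposition forces $2m/n$ to be non-integer, contradicting the standing assumption. Hence $\lambda_k(G) < 2m/n$, and since $\lambda_k(G)$ is an integer, we conclude $\lambda_k(G) \leq 2m/n - 1$.

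If one prefers a self-contained argument rather than a contrapositive, the short alternative is to repeat the two-case split from the proof of Proposition \ref{pro6}(1). Write $r := 2m/n$ for the integer average degree. Either $G$ has some vertex whose degree exceeds $r$, in which case a counting argument forces another vertex of degree strictly less than $r$, so by (1) of Observation \ref{obs1}, $\lambda_k(G) \leq \delta(G) \leq r-1$; or $G$ is $r$-regular, and then every edge exhibits two adjacent vertices of degree $\delta(G) = r$, so (3) of Observation \ref{obs1} yields $\lambda_k(G) \leq \delta(G) - 1 = r-1$.

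There is essentially no obstacle here because the substance of the argument already lives in Proposition \ref{pro6}(1); the corollary is really just its contrapositive made explicit. The only mild point to keep in mind is that the inequality is informative exactly in the regime $r \geq 1$ (equivalently $m \geq n/2$), matching the setting $m \geq n-1$ in Corollary \ref{cor2} where the upper bound $\lfloor 2m/n \rfloor$ is genuinely used.
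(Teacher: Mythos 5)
Your proposal is correct and matches the paper's approach: the paper offers no separate proof of this corollary precisely because it is the contrapositive of Proposition \ref{pro6}(1) combined with the bound of Corollary \ref{cor2}, and your self-contained alternative simply reproduces the two-case degree argument (a vertex of degree below the average, or Observation \ref{obs1}(3) in the regular case) used to prove Proposition \ref{pro6}(1). The only caveat, which affects the paper's statement as much as your argument, is the degenerate case $m=0$, where $2m/n=0$ is an integer but the claimed bound is negative while $\lambda_k(G)=0$; your closing remark about the regime $2m/n\geq 1$ already flags this.
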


\begin{lemma}\label{lem9}
Let $G\in \mathcal {G}(n,m)$. Then

$(1)$ $\lambda_k(G)+\lambda_k(\overline{G})\leq M(n,m)$, where

$$M(n,m)=\left\{
\begin{array}{cc}
n-\lceil\frac{k}{2}\rceil &if~m\geq n-1,~~~~~~~~~~~~~~~~~~~~~~~~~~~~~~~~~~~\\
&~or~k~is~even~and~m=0,~~~~~~~~~~~~\\
&~or~k~is~odd~and~0\leq m\leq \frac{k-1}{2};~~~~~\\
n-\lceil\frac{k}{2}\rceil-1~~~~~ &if~k~is~even~and~1\leq m<
n-1,~~~~~~~~~~~\\
&~or~k~is~odd~and~\frac{k+1}{2}\leq m<n-1.
\end{array}
\right.$$

$(2)$ $\lambda_k(G)\cdot \lambda_k(\overline{G})\leq N(n,m)$, where

$$N(n,m)=\left\{
\begin{array}{cc}
0 &if~0\leq m\leq n-2~~~~~~~~~~~~~~~~~~~~,\\
(\frac{2m}{n}-1)(n-2-\frac{2m}{n})&if~m\geq n-1~and~2m\equiv
0(mod~n),\\
\lfloor\frac{2m}{n}\rfloor(n-2-\lfloor\frac{2m}{n}\rfloor)&otherwise.~~~~~~~~~~~~~~~~~~~~~~~~~~~~~.
\end{array}
\right.$$

Moreover, these upper bounds are sharp.
\end{lemma}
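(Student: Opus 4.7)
My plan is to derive the two upper bounds by combining the universal estimate in Lemma~\ref{lem6} with the extremal characterization in Lemma~\ref{lem4} for part $(1)$, and by applying the degree-based bounds of Corollaries~\ref{cor2} and \ref{cor3} simultaneously to $G$ and $\overline{G}$ for part $(2)$. Sharpness will then be handled by producing an explicit witness in each subcase.

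For part $(1)$, Lemma~\ref{lem6} already gives $\lambda_k(G)+\lambda_k(\overline{G})\leq n-\lceil k/2\rceil$, which settles the first line of $M(n,m)$. For the second line (either $k$ even with $1\leq m<n-1$, or $k$ odd with $(k+1)/2\leq m<n-1$), the hypothesis $m<n-1$ forces $G$ to be disconnected, so $\lambda_k(G)=0$ and the sum reduces to $\lambda_k(\overline{G})$. If this were equal to $n-\lceil k/2\rceil$, then Lemma~\ref{lem4} applied to $\overline{G}$ would force $\overline{G}=K_n$ (giving $m=0$, contradicting $m\geq 1$ in the even case) or $\overline{G}=K_n\setminus M$ with $|M|\leq (k-1)/2$ (giving $m\leq (k-1)/2$, contradicting $m\geq (k+1)/2$ in the odd case). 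Hence the sum drops to at most $n-\lceil k/2\rceil-1$, as claimed.

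For part $(2)$, the case $0\leq m\leq n-2$ is immediate since $G$ is disconnected and $\lambda_k(G)=0$. For $m\geq n-1$, I would bound each factor separately. Since $|E(\overline{G})|=\binom{n}{2}-m$, one has $2|E(\overline{G})|/n=n-1-2m/n$, so $2m/n$ is an integer if and only if $2|E(\overline{G})|/n$ is. When $2m\equiv 0\pmod n$, Corollary~\ref{cor3} applies to both $G$ and $\overline{G}$, giving $\lambda_k(G)\leq 2m/n-1$ and $\lambda_k(\overline{G})\leq n-2-2m/n$; multiplying yields the first formula. Otherwise, Corollary~\ref{cor2} gives $\lambda_k(G)\leq \lfloor 2m/n\rfloor$, and since $2m/n\notin\mathbb{Z}$ the identity $\lfloor n-1-2m/n\rfloor=n-2-\lfloor 2m/n\rfloor$ yields $\lambda_k(\overline{G})\leq n-2-\lfloor 2m/n\rfloor$; multiplying gives the second formula.

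The main obstacle is producing sharp witnesses. For $M(n,m)$, the easy cases use $G=K_n$ when $m=\binom{n}{2}$, $G=\overline{K_n}$ when $m=0$ and $k$ is even, and any $G$ with $m\leq (k-1)/2$ edges when $k$ is odd, so that by Lemma~\ref{lem4} the complement satisfies $\lambda_k(\overline{G})=n-\lceil k/2\rceil$. The drop-by-one piece is realized by choosing $G$ with the prescribed $m$ so that $\overline{G}=K_n\setminus E(G)$ lies just outside the Lemma~\ref{lem4} range, forcing $\lambda_k(\overline{G})=n-\lceil k/2\rceil-1$. For $N(n,m)$, the hardest step is exhibiting a graph whose generalized $k$-edge-connectivity and that of its complement both attain their Corollary~\ref{cor2}/\ref{cor3} bounds simultaneously; near-regular constructions such as balanced complete bipartite graphs (in the spirit of Example~2) or Harary-type graphs from Lemma~\ref{lem1} are the natural candidates, and checking the values of $\lambda_k$ on both sides is the principal computation.
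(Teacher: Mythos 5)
Your derivation of the two upper bounds is sound and essentially the paper's: in part $(1)$ your use of Lemma~\ref{lem4} on $\overline{G}$ when $m<n-1$ is exactly what the paper packages as Proposition~\ref{pro4}, and in part $(2)$ your application of Corollary~\ref{cor2} to $\overline{G}$ in the non-integer case is an equivalent substitute for the paper's step $\Delta(G)\geq r+1$, hence $\lambda_k(\overline{G})\leq\delta(\overline{G})=n-1-\Delta(G)\leq n-2-r$; the resulting bounds coincide.

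The genuine gap is in the sharpness claims, which are part of the statement. First, for the value $n-\lceil\frac{k}{2}\rceil-1$ you assert that taking $\overline{G}$ ``just outside the Lemma~\ref{lem4} range'' forces $\lambda_k(\overline{G})=n-\lceil\frac{k}{2}\rceil-1$; being outside that range only yields the upper bound, not the matching lower bound. The paper's witness is $G=K_{1,\frac{k+1}{2}}\cup(n-\frac{k+3}{2})K_1$ (resp.\ $K_2\cup(n-2)K_1$ for $k$ even), and the lower bound $\lambda_k(\overline{G})\geq n-\frac{k+1}{2}-1$ comes from the edge-deletion inequality $\lambda_k(H\setminus e)\geq\lambda_k(H)-1$ combined with Lemma~\ref{lem4} applied to $\overline{G}+e$; some argument of this kind is needed and is absent from your sketch. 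Second, for $N(n,m)$ you produce no verified witness at all: ``checking the values of $\lambda_k$ on both sides'' is precisely where the work lies. The paper takes $G=C_n$ ($n\geq 9$, $k=3$) for the case $2m\equiv 0 \ (mod\ n)$ and proves $\lambda_3(\overline{C_n})=n-4$ by explicitly exhibiting $n-4$ edge-disjoint $S$-trees in $\overline{C_n}$ for every triple $S$ (a case analysis on the cyclic distances among the three chosen vertices), together with an edge-counting upper bound, and it uses $G=P_4$ for the remaining case. Your suggested candidates (balanced complete bipartite or Harary-type graphs) are left unchecked, and it is not evident that they make both factors attain their respective bounds simultaneously, so the sharpness half of the lemma remains unproved in your proposal.
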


\begin{proof}
From Theorem \ref{th3}, $(1)$ holds for $m\geq n-1$. We have given a
graph class to show that the upper bound is sharp. From Proposition
\ref{pro4}, $\lambda_k(G)+\lambda_k(\overline{G})=
\lambda_k(\overline{G})=n-\lceil\frac{k}{2}\rceil$ for $k$ even and
$m=0$, or $k$ odd and $0\leq m\leq \frac{k-1}{2}$. So for $k$ even
and $1\leq m< n-1$, or $k$ odd and $\frac{k+1}{2}\leq m< n-1$,
$\lambda_k(G)+\lambda_k(\overline{G})\leq
n-\lceil\frac{k}{2}\rceil-1$.

To prove the sharpness of the bound for $k$ odd and
$\frac{k+1}{2}\leq m<n-1$, we consider the graph
$G=K_{1,\frac{k+1}{2}}\cup (n-\frac{k+3}{2})K_1$. Now $\overline{G}$
is a graph obtained from the complete graph $K_n$ by deleting all
the edges of a star $K_{1,\frac{k+1}{2}}$. On one hand, by Lemma
\ref{lem4}, $\lambda_k(\overline{G})\leq n-\frac{k+1}{2}-1$. On the
other hand, by Lemma \ref{lem4}, we have $\lambda_k(\overline{G}+e)=
n-\frac{k+1}{2}$ for any $e\notin E(\overline{G})$, which implies
that $\lambda_k(\overline{G})\geq n-\frac{k+1}{2}-1$ (Note that
$\lambda_k(H\setminus e)\geq \lambda_k(H)-1$  for a connected graph
$H$, where $e\in E(H)$). So
$\lambda_k(G)+\lambda_k(\overline{G})=\lambda_k(\overline{G})=n-\frac{k+1}{2}-1$.
By the same reason, for $k$ even and $1\leq m<n-1$ one can check
that the graph $G=K_2\cup (n-2)K_1$ satisfies that
$\lambda_k(G)+\lambda_k(\overline{G})=\lambda_k(\overline{G})\geq
n-\frac{k}{2}-1$.

$(2)$ First, if $0\leq m\leq n-2$, then $G\in \mathcal {G}(n,m)$ is
disconnected. So $\lambda_k(G)\cdot \lambda_k(\overline{G})=0$. Next
if $\frac{2m}{n}=r$ is an integer, then
$\frac{2e(\overline{G})}{n}=n-1-r$ is also an integer. From
Corollary \ref{cor3}, we have $\lambda_k(G)\leq r-1$ and
$\lambda_k(\overline{G})\leq n-2-r$. So $\lambda_k(G)\cdot
\lambda_k(\overline{G})\leq
(r-1)(n-2-r)=(\frac{2m}{n}-1)(n-2-\frac{2m}{n})$. Finally, if
$2m=nr+\ell$ where $1\leq \ell\leq n-1$, then $\Delta(G)\geq r+1$.
By $(1)$ of Observation \ref{obs1}, $\lambda_k(\overline{G})\leq
\delta(\overline{G})=n-1-\Delta(G)\leq n-2-r$. So $\lambda_k(G)\cdot
\lambda_k(\overline{G})\leq
r(n-2-r)=\lfloor\frac{2m}{n}\rfloor(n-2-\lfloor\frac{2m}{n}\rfloor)$.
\end{proof}

To show the sharpness of the upper bound for $m\geq n-1$ and
$2m\equiv 0 \ (mod~n)$, we consider the following example.

\textbf{Example 3.}~~Let $G$ be a cycle $C_n=w_1w_2\cdots
w_nw_1(n\geq 9)$. Since $\frac{2m}{n}=2$ is an integer,
$\lambda_3(G)=\frac{2m}{n}-1=1$. It suffices to prove that
$\lambda_3(\overline{G})=n-2-\frac{2m}{n}=n-4$.

Choose $S=\{x,y,z\}\subseteq V(C_n)=V(G)$. We will show that
$\lambda(S)\geq n-4$. If $d_{C_n}(x,y)=1$ and $d_{C_n}(y,z)=1$,
without loss of generality, let $N_{C_n}(x)=\{x_1,y\}$ and
$N_{C_n}(z)=\{y,z_2\}$, then the trees $T_i= xw_i\cup yw_i\cup zw_i$
together with $T_1= xz\cup zx_1\cup x_1y$ form $n-4$ edge-disjoint
$S$-trees (See Figure 5 $(a)$), namely, $\lambda(S)\geq n-4$, where
$\{w_1,w_2,\cdots,w_{n-5}\}=V(G)\setminus \{x,y,z,x_1,z_2\}$.

If $d_{C_n}(x,y)=2$ and $d_{C_n}(y,z)=1$, without loss of
generality, let $N_{C_n}(x)=\{x_1,y_1\}$ and $N_{C_n}(z)=\{y_1,z\}$
and $N_{C_n}(z)=\{y,z_2\}$, then the trees $T_i= xw_i\cup yw_i\cup
zw_i$ together with $T_1= xy\cup xz$ and $T_2= z_2x\cup z_2y\cup
z_2y_1\cup y_1z$ form $n-4$ edge-disjoint $S$-trees (See Figure 5
$(b)$), namely, $\lambda(S)\geq n-4$, where
$\{w_1,w_2,\cdots,w_{n-6}\}=V(G)\setminus \{x,y,z,x_1,y_1,z_2\}$.

\begin{figure}[h,t,b,p]
\begin{center}
\scalebox{0.8}[0.8]{\includegraphics{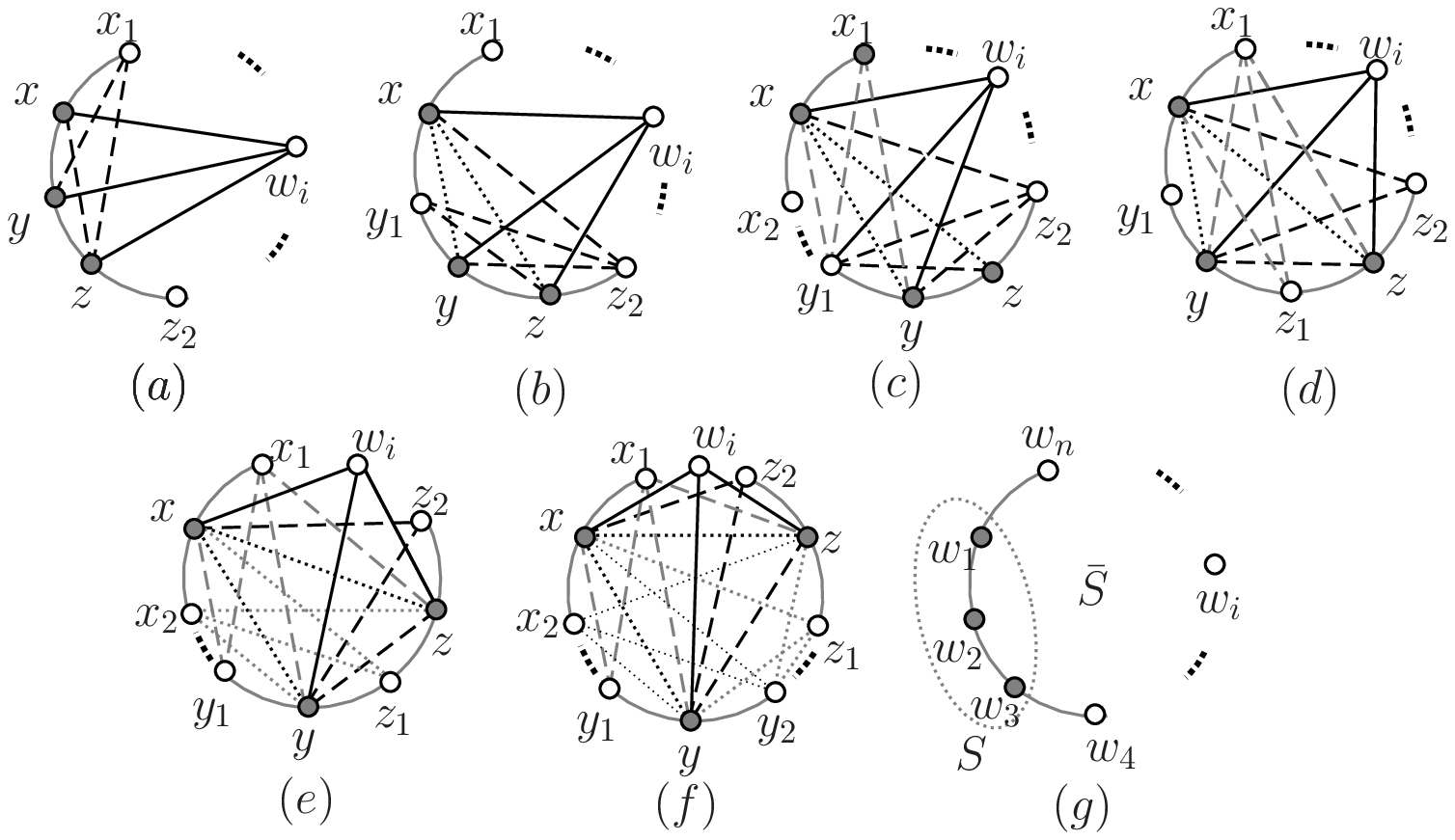}}\\
Figure 5. Graphs for Example $3$.
\end{center}
\end{figure}

If $d_{C_n}(x,y)\geq 3$ and $d_{C_n}(y,z)=1$, without loss of
generality, let $N_{C_n}(x)=\{x_1,x_2\}$ and $N_{C_n}(z)=\{y_1,z\}$
and $N_{C_n}(z)=\{y,z_2\}$, then the trees $T_i= xw_i\cup yw_i\cup
zw_i$ together with $T_1= xy\cup xz$ and $T_2= z_2x\cup z_2y\cup
z_2y_1\cup y_1z$ and $T_3= xy_1\cup y_1x_1\cup x_1y\cup x_1z$ form
$n-4$ edge-disjoint $S$-trees (See Figure 5 $(c)$), namely,
$\lambda(S)\geq n-4$, where
$\{w_1,w_2,\cdots,w_{n-7}\}=V(G)\setminus
\{x,y,z,x_1,x_2,y_1,z_2\}$.

If $d_{C_n}(x,y)=2$ and $d_{C_n}(y,z)=2$, without loss of
generality, let $N_{C_n}(x)=\{x_1,y_1\}$ and
$N_{C_n}(z)=\{y_1,z_1\}$ and $N_{C_n}(z)=\{z_1,z_2\}$, then the
trees $T_i= xw_i\cup yw_i\cup zw_i$ together with $T_1= xz\cup xy$
and $T_2= xz_2\cup yz_2\cup yz$ and $T_3= x_1y\cup x_1z\cup
x_1z_1\cup xz_1$ form $n-4$ edge-disjoint $S$-trees (See Figure 5
$(d)$), namely, $\lambda(S)\geq n-4$, where
$\{w_1,w_2,\cdots,w_{n-7}\}=V(G)\setminus
\{x,y,z,x_1,y_1,z_1,z_2\}$.

If $d_{C_n}(x,y)\geq 3$ and $d_{C_n}(y,z)=2$, without loss of
generality, let $N_{C_n}(x)=\{x_1,x_2\}$ and
$N_{C_n}(z)=\{y_1,z_1\}$ and $N_{C_n}(z)=\{z_1,z_2\}$, then the
trees $T_i= xw_i\cup yw_i\cup zw_i$ together with $T_1= xz\cup xy$
and $T_2= xz_2\cup z_2y\cup yz$ and $T_3= x_1y\cup x_1z\cup
x_1y_1\cup xy_1$ and $T_4= x_2y\cup x_2z\cup x_2z_1\cup z_1x$ form
$n-4$ edge-disjoint $S$-trees (See Figure 5 $(e)$), namely,
$\lambda(S)\geq n-4$, where
$\{w_1,w_2,\cdots,w_{n-8}\}=V(G)\setminus
\{x,y,z,x_1,x_2,y_1,y_2,z_2\}$.

Suppose that $d_{C_n}(x,y)\geq 3$ and $d_{C_n}(y,z)\geq 3$, without
loss of generality, let $N_{C_n}(x)=\{x_1,x_2\}$ and
$N_{C_n}(z)=\{y_1,y_2\}$ and $N_{C_n}(z)=\{z_1,z_2\}$. Then the
trees $T_i= xw_i\cup yw_i\cup zw_i$ together with $T_1= xz\cup xy$
and $T_2= xz_2\cup yz_2\cup yz$ and $T_3= xz_1\cup yz_1\cup
y_2z_1\cup y_2z$ and $T_4= x_1y\cup x_1z\cup x_1y_1\cup y_1x$  and
$T_5= x_2y\cup x_2z\cup x_2y_2\cup y_2x$ form $n-4$ edge-disjoint
$S$-trees (See Figure 5 $(f)$), namely, $\lambda(S)\geq n-4$, where
$\{w_1,w_2,\cdots,w_{n-9}\}=V(G)\setminus
\{x,y,z,x_1,x_2,y_1,y_2,z_1,z_2\}$.

From the arbitrariness of $S$, we know that
$\lambda_3(\overline{G})\geq n-4$ by definition. Now we show that
$\lambda_3(\overline{G})\leq n-4$ for $\overline{G}=\overline{C_n}$.
Choose $S=\{w_1,w_2,w_3\}\subseteq V(G)=V(C_n)$. Then $w_1w_n\in
E(C_n)$ and $w_3w_4\in E(C_n)$. Thus $|E(\overline{G}[S])|=1$ and
$|E_{\overline{G}}[S,\bar{S}]|=3(n-3)-2$, which implies that
$|E(\overline{G}[S])\cup E_{\overline{G}}[S,\bar{S}]|=3(n-3)-1$ (See
Figure 5 $(g)$). One can see that each tree connecting $S$ in
$\overline{G}$ uses at least $3$ edges from $E(\overline{G}[S])\cup
E_{\overline{G}}[S,\bar{S}]$. Therefore $\lambda_3(\overline{G})\leq
\frac{3(n-3)-1}{3}=n-3-\frac{1}{3}$, which results in
$\lambda_3(\overline{G})\leq n-4$ since $\lambda_3(\overline{G})$ is
an integer. So $\lambda_3(\overline{G})=n-4$ and
$\lambda_3(G)\cdot\lambda_3(\overline{G})=\lambda_3(C_n)\cdot\lambda_3(\overline{C_n})
=1\cdot(n-4)=(\frac{2m}{n}-1)(n-2-\frac{2m}{n})$. The upper bound is
sharp.

For $m\geq n-1$ and $\frac{2m}{n}=r+\ell(1\leq \ell \leq n-1)$, let
$G=P_4$. Then
$\lambda_3(G)=1=\lfloor\frac{6}{4}\rfloor=\lfloor\frac{2m}{n}\rfloor$
and
$\lambda_3(\overline{G})=\lambda_3(P_4)=1=4-2-\lfloor\frac{6}{4}\rfloor
=n-2-\lfloor\frac{2m}{n}\rfloor$. So
$\lambda_3(G)\cdot\lambda_3(\overline{G})
=\lfloor\frac{2m}{n}\rfloor(n-2-\lfloor\frac{2m}{n}\rfloor)$.

Combining with Lemmas \ref{lem8} and \ref{lem9}, we can obtain the
following result.

\begin{theorem}\label{th4}
Let $G\in \mathcal {G}(n,m)$. For $n\geq 6$, we have

$(1)$ $L(n,m)\leq \lambda_k(G)+\lambda_k(\overline{G})\leq M(n,m)$;

$(2)$ $0\leq \lambda_k(G)\cdot \lambda_k(\overline{G})\leq N(n,m)$,

where $L(n,m), M(n,m), N(n,m)$ are defined in Lemmas \ref{lem8} and
\ref{lem9}.

Moreover, the upper and lower bounds are sharp.
\end{theorem}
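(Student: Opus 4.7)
The plan is that Theorem \ref{th4} is essentially a repackaging of Lemmas \ref{lem8} and \ref{lem9}, together with the sharpness witnesses that have already been supplied either inside the proofs of those lemmas or in Example 2, Example 3, and Proposition \ref{pro4}. So the proof requires no new argument beyond citing these results and observing that they together yield the two-sided inequalities.

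Concretely, first I would take the bound $\lambda_k(G)+\lambda_k(\overline{G})\geq L(n,m)$ from Lemma \ref{lem8}(1) and pair it with the bound $\lambda_k(G)+\lambda_k(\overline{G})\leq M(n,m)$ from Lemma \ref{lem9}(1) to obtain statement (1). Then for statement (2), I would combine the trivial lower bound $\lambda_k(G)\cdot\lambda_k(\overline{G})\geq 0$ from Lemma \ref{lem8}(2) with the product upper bound $\lambda_k(G)\cdot\lambda_k(\overline{G})\leq N(n,m)$ from Lemma \ref{lem9}(2), completing the two-sided sandwich.

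For sharpness, I would point to the explicit constructions already developed in the text. The lower bound in (1) is attained by $G=K_{1,n-2}\cup K_1$ in the range $\lfloor n/3 \rfloor + 1 \leq m \leq \binom{n}{2}$ and by $G=nK_1$ (with $k=n$) in the range $0 \leq m \leq \lfloor n/3 \rfloor$; the upper bound in (1) is attained by the construction of Example 2 when $m \geq n-1$, by Proposition \ref{pro4} when $\overline{G}=K_n$ or $\overline{G}=K_n\setminus M$ with $|M|\leq (k-1)/2$, and by the small graphs $K_{1,(k+1)/2}\cup (n-(k+3)/2)K_1$ or $K_2\cup (n-2)K_1$ in the remaining ranges of $m$. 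The lower bound $0$ in (2) is trivially tight whenever $G$ or $\overline{G}$ is disconnected, while the upper bound $N(n,m)$ in (2) is realised by $G=C_n$ when $2m\equiv 0 \pmod n$ (as worked out in Example 3) and by $G=P_4$ in the generic case.

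The only obstacle is bookkeeping: one must verify that each sharpness example lies in the correct piecewise slice of $L(n,m)$, $M(n,m)$, and $N(n,m)$, and satisfies the running hypothesis $n\geq 6$. No new conceptual content is required, since all nontrivial combinatorial work has been carried out in the proofs of Lemmas \ref{lem8} and \ref{lem9} and in Examples 2 and 3.
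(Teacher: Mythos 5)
Your proposal is correct and matches the paper exactly: Theorem \ref{th4} is stated there with no separate proof, being obtained by simply combining the lower bounds of Lemma \ref{lem8} with the upper bounds of Lemma \ref{lem9}, whose proofs (together with Example 2, Example 3 and Proposition \ref{pro4}) already contain the sharpness constructions you cite. Nothing further is needed.
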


\small

\end{document}